\def\Sym{\operatorname{Sym}}
\def\SSYT{\operatorname{SSYT}}
\DeclareMathOperator{\tr}{tr}
\numberwithin{equation}{section}
\newlength\mylength
\def\vlongmapsto#1{%
\begin{tikzpicture}
\draw (0,0.5mm) -- (0,-0.5mm);
\setlength{\mylength}{\widthof{#1}}
\draw[->] (0,0) -- (1.2\mylength,0) node[above,midway] {#1};
\end{tikzpicture}
}
\newcommand{\bwedge}[1]{{\textstyle{\bigwedge\!\!^{\,#1}\, }}}
\newcommand{\Ext}{\mathchoice{{\textstyle\bigwedge}}%
	{{\bigwedge}}%
	{{\textstyle\wedge}}%
	{{\scriptstyle\wedge}}}
\DeclareMathOperator{\ev}{ev}
\DeclareMathOperator{\im}{im}
\newcommand{\E}{\hskip-1pt E}
\newsavebox\MBox
\DeclareMathOperator{\diag}{diag}
\def\N{\mathbb N} 
\def\C{\mathbb C}
\def\F{\mathbb F}
\def\SL{\mathrm{SL}}
\def\GL{\mathrm{GL}}
\def\Sym{\operatorname{Sym}}
\def\x{\mathbf{x}}
\def\y{\mathbf{y}}
\def\z{\mathbf{z}}
\def\XX{\mathbf{X}}
\def\YY{\mathbf{Y}}
\def\sgn{\text{sgn}}
\renewcommand{\leq}{\leqslant}
\renewcommand{\geq}{\geqslant}
\renewcommand{\le}{\leqslant}
\renewcommand{\ge}{\geqslant}
\newtheorem{Theorem}{Theorem}[section]
\newtheorem*{theorem*}{Theorem}
\newtheorem{MainCorollary}[Theorem]{Corollary}
\newtheorem{Corollary}[Theorem]{Corollary}
\newtheorem{Proposition}[Theorem]{Proposition}
\newtheorem{Lemma}[Theorem]{Lemma}
\newtheorem{Example}[Theorem]{Example}
\newtheorem{Remark}[Theorem]{Remark}
\theoremstyle{definition}
\newtheorem{Definition}[Theorem]{Definition}
\newcounter{thmlistcnt}
\newenvironment{thmlist}%
{\setcounter{thmlistcnt}{0}%
	\begin{list}{\emph{(\roman{thmlistcnt})}}{%
			\usecounter{thmlistcnt}%
			\setlength{\topsep}{3pt}%
			\setlength{\leftmargin}{0pt}%
			\setlength{\itemsep}{-3pt}%
			\setlength{\labelwidth}{17pt}
			\setlength{\itemindent}{30pt}}%
	}%
	{\end{list}}%
\newenvironment{bulletlist}%
{\begin{list}{$\bullet$}{%
			\usecounter{thmlistcnt}%
			\setlength{\topsep}{3pt}%
			\setlength{\leftmargin}{0pt}%
			\setlength{\itemsep}{2pt}%
			\setlength{\labelwidth}{14pt}
			\setlength{\itemindent}{24pt}}%
	}%
	{\end{list}}%
\newcommand{\qbinom}[2]{\genfrac{[}{]}{0pt}{}{#1}{#2}_q}
\newcommand{\SymG}{\mathfrak{S}}
\newcommand{\DN}{D}
\newcommand{\deltaD}{a}
\newcommand{\wcirc}[2]{\draw (#1,#2) circle (2pt);\fill[white] (#1,#2) circle (2pt);}
\title[Modular isomorphisms of \texorpdfstring{$\mathrm{SL}_2(\mathbb{F})$}{SL2(F)}-plethysms]{Modular isomorphisms of \texorpdfstring{$\mathrm{SL}_2(\mathbb{F})$}{SL2(F)}-plethysms for Weyl modules labelled by hook partitions}
\author[\'A.~Gutiérrez, \'A.~L.~Mart\'inez, M.~Szwej, M.~Wildon]
{\'Alvaro Guti\'errez, \'Alvaro L. Mart\'inez,\\ Micha\l\ Szwej, and Mark Wildon}
\thanks{ÁG was funded by a University of Bristol Research Training Support Grant.
	MS and MW thank the Heilbronn Institute for Mathematical Research for financial support.
}
\address{University of Bristol}
\email[Á.~Gutiérrez]{a.gutierrezcaceres@bristol.ac.uk}
\email[M.~Szwej]{michal.szwej@bristol.ac.uk}
\email[M.~Wildon]{mark.wildon@bristol.ac.uk}
\address{Columbia University}
\email[Á.~L. Mart\'inez]{alm2297@columbia.edu}
\date{\today} 
\begin{document}
	\maketitle
	
	\begin{abstract}
		Let $\Delta^\lambda$ be the Weyl functor for the partition $\lambda$ and
		let $E$ be the natural $2$-dimensional
		representation of $\SL_2(\F)$, where $\F$ is an arbitrary field.
		We give an explicit isomorphism showing that any
		$\SL_2(\F)$-plethysm $\Delta^{(M,1^N)}\Sym^d \E$ factors as a tensor product of 
		two simpler $\SL_2(\F)$-plethysms, each defined using only symmetric powers. This result categorifies Stanley's Hook Content Formula for hook-shaped partitions and 
		proves a conjecture of Mart\'inez--Wildon (2024). 
		In a similar spirit we categorify
		the classical binomial identity $\binom{a}{b}\binom{b}{c}=\binom{a}{c}\binom{a-c}{b-c}$, obtaining
		a new family of $\SL_2(\F)$-isomorphisms between tensor products of plethysms.
		Our methods are characteristic independent and provide a framework that is broadly applicable to the study
		of isomorphisms between plethystic representations of $\SL_2(\F)$.
	\end{abstract}
	
	\thispagestyle{empty}

	\section{Introduction}
	Let $\F$ be an arbitrary field and let $E$ be the natural representation of the special
	linear group $\SL_2(\F)$.
	We define an $\SL_2(\F)$-\emph{plethysm} to be a representation of $\SL_2(\F)$ of the form
	$\Delta^\lambda \Sym^d \E$, where $\Delta^\lambda$ is the Weyl functor for the partition $\lambda$.
	Numerous isomorphisms between $\SL_2(\C)$-plethysms
	are known: see \cite{PagetWildon21} for a comprehensive account. Much less is known about 
	\emph{characteristic-free} isomorphisms, holding over the arbitrary field $\F$.
	In \cite{McDowellWildon}, McDowell and the fourth author generalise the 
	classical Wronskian isomorphism to an explicit characterstic-free isomorphism
	\begin{equation}\label{eq:Wronskian}
		\Sym_N\Sym^{d}\E\cong \Ext^N\Sym^{d+N-1}\E. 
	\end{equation}
	In~\cite{MW24}, the second and fourth authors construct an explicit characteristic-free isomorphism
	\[ \Sym^{N-1}\hskip-1pt\E \otimes \Ext^{N+1} \Sym^{d+1}\hskip-1pt\E \cong \Delta^{(2,1^{N-1})} \Sym^d\E\, .\]
	Conjecture 3.3~in \cite{MW24} proposes a more general isomorphism in which $(2,1^{N-1})$ is replaced with
	an arbitrary hook partition. This is proved by our first main theorem.
	
	\begin{Theorem}[Hook plethysms]\label{thm:hook}
		Let $M$, $d \in \N_0$, and let $N \in \N$ be such that $N\le d+1$. There is an isomorphism of $\SL_2(\F)$-representations
		\[ \Sym_{M}\Sym^{N-1}\E\hskip1pt\otimes\hskip1pt\Sym_{M+N}\Sym^{d-N+1}\E \cong \Delta^{(M+1,1^{N-1})}\Sym^{d}\E. \]
	\end{Theorem}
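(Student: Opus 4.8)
The plan is to construct an \emph{explicit} homomorphism of $\SL_2(\F)$-representations between the two sides, defined uniformly over $\Z$ from natural transformations together with the Wronskian isomorphism, and then to prove it is an isomorphism by a leading-term argument against combinatorial bases, reducing to a dimension count. That count is exactly the principal specialisation of the Schur function $s_{(M+1,1^{N-1})}$ given by Stanley's hook content formula, so the numerical identity is classical; the content of the theorem is that a single characteristic-free map realises it. A quick check shows both sides have dimension $\binom{M+N-1}{N-1}\binom{M+d+1}{M+N}$, so any $\SL_2(\F)$-map between them is an isomorphism once it is injective, and injectivity over an arbitrary field will follow from unitriangularity of its matrix in a suitable $\Z$-basis.

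Concretely, first I would fix models: write $E=\langle x,y\rangle$ and realise $\Sym^e E$ as the space $\F[x,y]_e$ of binary forms of degree $e$. For the right-hand side I would use the standard presentation of the hook Weyl functor, which realises $\Delta^{(M+1,1^{N-1})}V$ inside $\Sym^{M}V\otimes\Ext^{N}V$ as the image of a natural map from $\Sym^{M+1}V\otimes\Ext^{N-1}V$, here with $V=\Sym^d E$. The map I would then build is a composite of four natural ingredients: (i) the comultiplication $\Sym^{d}E\to\Sym^{N-1}E\otimes\Sym^{d-N+1}E$ dual to multiplication of binary forms, splitting the degree $d$ into a ``leg part'' $N-1$ and a ``remainder part'' $d-N+1$; (ii) the natural transformation $\Sym^{M}(A\otimes B)\to\Sym^{M}A\otimes\Sym^{M}B$ (the $\lambda=(M)$ piece of the Cauchy filtration, which is defined over $\Z$); (iii) the Wronskian isomorphism~\eqref{eq:Wronskian} applied on the exterior tensorand, giving $\Ext^{N}\Sym^{d}E\cong\Sym_{N}\Sym^{d-N+1}E$; and (iv) the multiplication $\Sym_{M}\Sym^{d-N+1}E\otimes\Sym_{N}\Sym^{d-N+1}E\to\Sym_{M+N}\Sym^{d-N+1}E$ in the symmetric algebra on $\Sym^{d-N+1}E$. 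Chaining these produces an $\SL_2(\F)$-map $\Delta^{(M+1,1^{N-1})}\Sym^{d}E\to\Sym_{M}\Sym^{N-1}E\otimes\Sym_{M+N}\Sym^{d-N+1}E$; equivariance is automatic, every ingredient being a natural transformation of functors on $\F$-vector spaces or the equivariant map of~\eqref{eq:Wronskian}. (If the paper's convention makes $\Delta^{(M+1,1^{N-1})}V$ a quotient rather than a sub of $\Sym^{M}V\otimes\Ext^{N}V$, one dualises, replacing multiplications by comultiplications and the Wronskian map by its inverse.) As a sanity check, the construction should specialise at $M=0$ to~\eqref{eq:Wronskian} and at $M=1$ to the Mart\'inez--Wildon isomorphism for $\Delta^{(2,1^{N-1})}$, so it is a genuine common generalisation of both.

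The crux, and the step I expect to be hardest, is proving bijectivity over an arbitrary $\F$. I would index a basis of $\Delta^{(M+1,1^{N-1})}\Sym^{d}E$ by semistandard Young tableaux of hook shape $(M+1,1^{N-1})$ with entries in $\{1,\dots,d+1\}$, and a basis of the target by pairs consisting of a weakly increasing word of length $M$ in $\{1,\dots,N\}$ and a weakly increasing word of length $M+N$ in $\{1,\dots,d-N+2\}$. Decoding the explicit map in these bases --- expanding the Wronskian as a determinant and the comultiplications as sums over subsets --- should show it is unitriangular for a carefully chosen term order, whence its matrix over $\Z$ has determinant $\pm1$, so the map is an isomorphism over $\Z$ and hence after base change to any field. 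The combinatorial heart of this step is a bijection between the two indexing sets, which is a tableau-theoretic form of the identity $\binom{a}{b}\binom{b}{c}=\binom{a}{c}\binom{a-c}{b-c}$ highlighted in the abstract: splitting a hook tableau into its column and its arm mirrors the ``subset inside a subset'' description of the left-hand side. I expect the genuinely delicate points to be: choosing the term order so that the Wronskian determinant is \emph{uni}triangular and not merely triangular; verifying that the constructed map is compatible with the hook straightening relations, which should reduce to the alternating property of the Wronskian matching the antisymmetry along the leg; and handling the boundary cases $M=0$ and $N=d+1$.
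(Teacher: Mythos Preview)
Your strategy is genuinely different from the paper's. The paper does not assemble the map from natural transformations of symmetric/exterior/Weyl functors, and it never argues injectivity by unitriangularity on tableau bases. Instead it works in a polynomial model: $\Sym_R\Sym^eE$ is identified with the space $\Lambda_{\le e}[\x]$ of symmetric polynomials of bounded degree in $R$ variables, and $\Delta^{(M+1,1^{N-1})}\Sym^dE$ is identified with the kernel of an explicit operator $\delta_M$ on such polynomials (Lemma~\ref{lemma:DeltaKernel}, equation~\eqref{eq:poly-deltaM}). In this model the map goes in the opposite direction to yours and is simply the evaluation $\psi:P(\x,\y,\z)\mapsto P(\x,\y,\y)$ (with $|\x|=N$, $|\y|=|\z|=M$), followed by multiplication by the Vandermonde $a_\rho(\x)$; that the result lies in $\ker\delta_M$ is a one-line application of Lagrange interpolation (Lemma~\ref{lem:psiLands}). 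Equivariance is checked by reducing to $\sl_2(\C)$ via an integrality argument. Most strikingly, injectivity is obtained not by any leading-term analysis but by exhibiting an \emph{explicit left inverse} $\tilde\psi$, again built from Lagrange interpolation operators (Proposition~\ref{prop:left-inverse}); the dimension count then finishes.

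Your functorial composite, if it can be made precise, would make equivariance automatic and might point toward $\GL_n$ generalisations. But the step you yourself flag as delicate---choosing a term order making the map unitriangular over $\Z$, and checking compatibility with the hook straightening relations---is a genuine gap, not a routine verification: this is exactly where characteristic-free plethystic arguments tend to break, and you have not given a candidate order or bijection, only asserted that one should exist. There is also a convention mismatch to resolve: the paper's $\Delta^{(M+1,1^{N-1})}V$ sits in $\bigwedge^N V\otimes\Sym_M V$ (lower symmetric power), whereas your step~(ii) uses $\Sym^M$, so the dualisation you mention parenthetically is not optional. The paper's approach sidesteps all of this: in the polynomial model the map, its image condition, and its inverse are each a short formula, and the two points you single out as hardest simply do not arise.
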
 
	
	This result is notable as the first in the literature giving a tensor factorization of an $\SL_2(\F)$-plethysm for an
	arbitrary hook partition.
	We define the Weyl functor $\Delta^{(M+1,1^{N-1})}$ in \S\ref{subsec:WeylHook} below.
	Lower symmetric and exterior powers are defined in
	\S\ref{subsec:exteriorLowerSymmetricPowers} and upper symmetric
	powers in~\S\ref{subsec:upperSymmetricPowers}.
	
	Our proof gives an explicit isomorphism defined over the integers and so over any field. 
	In Corollary~\ref{cor:GL2-isom} we show
	that the isomorphism in Theorem~\ref{thm:hook}
	lifts to an isomorphism of representations of $\GL_2(\F)$ 
	if we take the tensor product
	of the left-hand side with the $\binom{N}{2}$-th power of the determinant representation
	of $\GL_2(\F)$. (A similar `lift' is possible for all the $\SL_2(\F)$-isomorphisms in this paper: see Lemma~\ref{lemma:SL2toGL2}.)
	Taking $\F = \C$ and equating the
	characters on either side using~\eqref{eq:qHookCharacter} 
	and its special case~\eqref{eq:qSym} 
	we obtain
	\[  q^{\binom{N}{2}}\qbinom{M+N-1}{M} \qbinom{M+d+1}{M+N} = s_{(M+1,1^{N-1})}(1,q,\ldots, q^d)\]
	where $s_\lambda$ is the Schur function for the partition $\lambda$. 
	While this identity
	can be deduced from Stanley's Hook Content Formula for the partition $(M+1,1^{N-1})$, Theorem~\ref{thm:hook} gives an 
	independent proof. 
	
	Our second main theorem is motivated by the basic identity
	\begin{equation}
		\label{eq:GTL}
		\binom{M+N}{M}\binom{M+N+d}{M+N} = \binom{N+d}{N} \binom{M+N+d}{M}. \end{equation}
	Because each side equals
	$(M+N+d)! \bigl/ M!N! d!$, this identity is sometimes referred to as
	\emph{trinomial revision} \cite[page~174]{GKP89}. 
	It is the decategorification, by taking dimensions, of our second main theorem.
	
	\begin{Theorem}[Trinomial plethysms]
		\label{thm:GTL}
		Let $M$, $N$, $d \in \N_0$. There is an isomorphism of $\SL_2(\F)$-representations
		\[
		\Sym_M\Sym^N\hskip-1pt\E\otimes\Sym_{M+N}\Sym^d\E \cong
		\Sym_N\Sym^{d}\E\otimes\Sym_M\Sym^{d+N}\hskip-1pt\E.
		\]      
	\end{Theorem}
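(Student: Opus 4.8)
The plan is to write down an explicit $\SL_2(\F)$-equivariant isomorphism, defined over $\Z$, as a short composite of the two most elementary operations available on these plethysms, and then to prove it is an isomorphism by a leading-term argument in the spirit of the proof of Theorem~\ref{thm:hook}. Throughout I would work in the polynomial models of \S\ref{subsec:upperSymmetricPowers}: $\Sym^k\E$ is the space of binary forms of degree $k$, and each upper symmetric power $\Sym_m V$ carries its monomial $\Z$-basis, a comultiplication $\Delta\colon\Sym_{a+b}V\to\Sym_a V\otimes\Sym_b V$, and, for any equivariant multiplication $V\otimes W\to U$, an induced componentwise multiplication $\mu\colon\Sym_m V\otimes\Sym_m W\to\Sym_m U$; all of these maps are $\SL_2(\F)$-equivariant and integral. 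As an orienting remark: over $\F=\C$ the two sides have equal characters --- this is exactly the $q$-analogue of~\eqref{eq:GTL}, $q$-trinomial revision --- so the two sides are abstractly isomorphic over $\C$, and the real content is a map valid in every characteristic.

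Concretely, take $\Phi$ to be the composite
\[
\Sym_M\Sym^N\E\otimes\Sym_{M+N}\Sym^d\E\ \xrightarrow{\ \id\otimes\Delta\ }\ \Sym_M\Sym^N\E\otimes\Sym_M\Sym^d\E\otimes\Sym_N\Sym^d\E\ \xrightarrow{\ \mu\otimes\id\ }\ \Sym_M\Sym^{d+N}\E\otimes\Sym_N\Sym^d\E,
\]
where $\Delta$ is the divided-power comultiplication on $\Sym_{M+N}\Sym^d\E$ and $\mu$ is the componentwise multiplication induced by multiplication of binary forms $\Sym^N\E\otimes\Sym^d\E\to\Sym^{d+N}\E$; the target is the right-hand side of Theorem~\ref{thm:GTL} up to the equivariant flip of tensor factors. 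Since $\Delta$ and $\mu$ are each manifestly $\SL_2(\F)$-equivariant and defined over $\Z$, so is $\Phi$, and by~\eqref{eq:GTL} its source and target are free $\Z$-modules of the same finite rank. Tracking the determinant twists through $\Delta$ and $\mu$, together with Lemma~\ref{lemma:SL2toGL2}, will produce the precise $\GL_2(\F)$-refinement; in the first small cases one can check directly that $\Phi$ is already an isomorphism over $\Z$, which suggests that no twist of $\Phi$ is needed.

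The main obstacle is proving that $\Phi$ is an isomorphism characteristic-freely. For this it suffices to fix, on each side, a total order on the monomial $\Z$-basis refining the weight grading and to show that the matrix of $\Phi$ in these bases is triangular with all diagonal entries $\pm1$; invertibility over $\Z$ then gives invertibility over every field $\F$. The pairing of bases one should use is the bijection underlying $q$-trinomial revision: a basis monomial of the source is a pair consisting of a partition in an $M\times N$ box (recording which monomial basis vectors of $\Sym^N\E$ occur) and a partition in an $(M+N)\times d$ box, which one rereads as a pair consisting of a partition in an $M\times(d+N)$ box and a partition in an $N\times d$ box. The claim to verify is that $\Phi$ sends the first monomial to $\pm1$ times the second plus strictly smaller monomials; this reduces to an elementary but delicate analysis of how $\Delta$ (which splits a divided-power monomial over all ways of distributing it) interacts with componentwise multiplication of binary forms --- one must isolate the unique ``extremal'' split-then-multiply contribution to each leading term and confirm it occurs with coefficient $\pm1$. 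I expect this combinatorial lemma, rather than the construction of $\Phi$ or the routine equivariance check, to be the technical heart of the proof; granting it, Theorem~\ref{thm:GTL} follows uniformly in $\operatorname{char}\F$.
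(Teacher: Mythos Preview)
Your map $\Phi$ is correct, and in fact it coincides with the paper's map $\phi$. In the polynomial model of \S\ref{subsec:construction} one has $\Sym_M\Sym^N\!\E\cong\Lambda_{\le N}[\z]$ with $|\z|=M$, and $\Sym_{M+N}\Sym^d\E\cong\Lambda_{\le d}[\x,\y]$ with $|\x|=N$, $|\y|=M$. Under these identifications your comultiplication $\Delta$ is simply the inclusion $\Lambda_{\le d}[\x,\y]\hookrightarrow\Lambda_{\le d}[\y]\otimes\Lambda_{\le d}[\x]$ (a fully symmetric polynomial, viewed as block-symmetric), and your componentwise multiplication $\mu$ sends $Q(\z)\otimes S(\y)\mapsto Q(\y)S(\y)$. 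Composing, your $\Phi$ sends $Q(\z)\otimes R(\x,\y)\mapsto Q(\y)R(\x,\y)$, which is exactly the evaluation map $\phi$ of Definition~\ref{def:phi}. So your construction is a pleasant functorial repackaging of the paper's map, and your equivariance argument (immediate from the functoriality of $\Delta$ and $\mu$) is arguably cleaner than the paper's reduction to $\sl_2(\C)$ in Proposition~\ref{prop:SL2invariance}.

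The genuine gap is bijectivity. You propose a triangularity argument with unit diagonal, but you explicitly defer the key combinatorial lemma, and it is not clear that the products $m_\mu(\y)\,m_\lambda(\x,\y)$ expand with leading coefficient $\pm1$ in any natural monomial basis of $\Lambda_{\le d}[\x]\otimes\Lambda_{\le d+N}[\y]$; products of monomial symmetric functions typically have nontrivial leading multiplicities. The paper sidesteps this entirely by constructing an explicit left inverse via Lagrange interpolation (Proposition~\ref{prop:left-inverse}): since $Q$ has degree at most $N$ in each $z_j$, one can recover $Q(z_j,\ldots)$ from the values $Q(y_j,\ldots)$ and $Q(x_i,\ldots)$ at the $N+1$ nodes $x_1,\ldots,x_N,y_j$, and the symmetry of $R(\x,\y)$ in $\x\cup\y$ makes $R$ transparent to the interpolation operators. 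Iterating over $j=1,\ldots,M$ inverts $\phi$ on its image; equality of dimensions (trinomial revision) finishes. This replaces your unproved combinatorial lemma with a two-line application of polynomial interpolation, and is the idea you are missing.
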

	
	While the
	isomorphism
	in Theorem~\ref{thm:GTL} can be proved to exist when $\F = \C$ by character calculations,
	as
	Example~\ref{ex:Sym} 
	shows, upper and lower symmetric powers typically define non-isomorphic
	modules over fields of prime characteristic. It is therefore remarkable
	that, when the symmetric powers are chosen correctly,
	there is an isomorphism holding over an arbitrary field. Again, our proof gives
	it explicitly. 
    In \cite{RSWY}, submitted to the arXiv after the original version of this paper, there is a different approach to studying isomorphisms between plethysms via sheaf (hyper)cohomology. In Theorem 3.1 the authors give an isomorphism between certain complexes which, in each degree, recovers Theorem~\ref{thm:GTL} over $\F=\C$ for parameters satisfying a certain divisibility condition. They conjecture that this divisibility condition can be relaxed. This is proved by our Theorem~\ref{thm:GTL} over an arbitrary field.

        To illustrate the power of the theorem we state the following three special cases.
	
	\begin{MainCorollary}\label{cor:T&L}
		Let $K$, $d \in \N_0$. There exist isomorphisms of $\SL_2(\F)$-repres\-entations:
		\begin{thmlist}
			\item 
			\label{thm:T&L1}
			$
			\Sym^{d+K}\E\otimes\Sym_K\Sym^{d}\E\, \cong
			\Sym^K\E\otimes\Sym_{K+1}\Sym^d\E\,;
			$
			\medskip
			\item
			\label{thm:T&L2}
			$
			\Sym_K\E\otimes\Sym_{K+1}\Sym^{d}\E \cong \Sym^{d}\E\otimes\Sym_K\Sym^{d+1}\E\,;
			$
			\medskip
			\item
			\label{thm:T&L3}
			$
			\Sym_{d+K}\E\otimes\Sym_{K}\Sym^{d}\E \cong \Sym_{d}\E\otimes\Sym_K\Sym^{d+1}\E.
			$
			\medskip
		\end{thmlist}
	\end{MainCorollary}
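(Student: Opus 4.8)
The plan is to obtain each of~(i)--(iii) as a direct specialisation of Theorem~\ref{thm:GTL}, so the only preliminary is to record two trivial natural identifications: $\Sym_1 V\cong V$ for every $\SL_2(\F)$-module $V$, and $\Sym^1\E\cong\E$. Both are immediate from the definitions of lower and upper symmetric powers in~\S\ref{subsec:exteriorLowerSymmetricPowers} and~\S\ref{subsec:upperSymmetricPowers}. With these in hand, every bare symmetric power occurring in~(i)--(iii) can be read as an instance of the functor $\Sym_a\Sym^b$ appearing in Theorem~\ref{thm:GTL}: an upper power $\Sym^m\E$ as $\Sym_1\Sym^m\E$, and a lower power $\Sym_m\E$ as $\Sym_m\Sym^1\E$. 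Since the isomorphism of Theorem~\ref{thm:GTL} is explicit and characteristic-free, the resulting isomorphisms inherit these properties.

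To locate the correct substitutions one can compare the dimensions of the two sides of each display with the identity~\eqref{eq:GTL}: the three statements decategorify, by taking dimensions, to~\eqref{eq:GTL} with $(M,N,d)$ equal to $(1,K,d)$, $(K,1,d)$ and $(K,d,1)$ respectively. I would then simply apply Theorem~\ref{thm:GTL} with these three parameter triples. For $(1,K,d)$ the left-hand side becomes $\Sym_1\Sym^K\E\otimes\Sym_{K+1}\Sym^d\E=\Sym^K\E\otimes\Sym_{K+1}\Sym^d\E$ and the right-hand side becomes $\Sym_K\Sym^d\E\otimes\Sym_1\Sym^{d+K}\E=\Sym_K\Sym^d\E\otimes\Sym^{d+K}\E$; for $(K,1,d)$ they become $\Sym_K\E\otimes\Sym_{K+1}\Sym^d\E$ and $\Sym^d\E\otimes\Sym_K\Sym^{d+1}\E$; and for $(K,d,1)$ they become $\Sym_K\Sym^d\E\otimes\Sym_{d+K}\E$ and $\Sym_d\E\otimes\Sym_K\Sym^{d+1}\E$. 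In each case, exchanging the two tensor factors on one side --- which is harmless, the category of $\SL_2(\F)$-modules being symmetric monoidal --- yields exactly the asserted isomorphism.

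I do not expect a genuine obstacle: the mathematical content sits entirely in Theorem~\ref{thm:GTL}, and the corollary is a matter of substitution and relabelling. The only things to check are that each of $(1,K,d)$, $(K,1,d)$, $(K,d,1)$ lies in $\N_0^3$ whenever $K,d\in\N_0$ --- so that no degenerate cases intervene --- together with the two identifications of the first paragraph. As a final consistency check, part~(iii), say, decategorifies to $\binom{K+d}{K}\binom{K+d+1}{K+d}=\binom{d+1}{d}\binom{K+d+1}{K}$, both sides being $(K+d+1)!/(K!\,d!)$.
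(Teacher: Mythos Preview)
Your argument is correct. Parts~(i) and~(ii) match the paper's proof exactly (the same substitutions $(M,N)=(1,K)$ and $(M,N)=(K,1)$ into Theorem~\ref{thm:GTL}). For part~(iii), however, the paper takes a different route: rather than specialising Theorem~\ref{thm:GTL} a third time, it dualises both sides of~(i) using the $\SL_2(\F)$-isomorphisms $(\Sym_a\E)^\star\cong\Sym^a\E$ and $(\Sym_a\Sym^b\E)^\star\cong\Sym_b\Sym^a\E$ from~\cite{McDowellWildon}, and then swaps the roles of $d$ and $K$. Your approach---taking $(M,N,d)=(K,d,1)$ and using $\Sym_m\Sym^1\E\cong\Sym_m\E$---is more uniform and avoids invoking those duality results; the paper's route, on the other hand, exhibits~(iii) as formally dual to~(i), which is a structural observation worth having. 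Both are valid and short.
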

	
	\vspace*{-6pt}
	These isomorphisms are of 
	independent interest. They decategorify to the \emph{team-and-leader} identities in~\eqref{eq:TLidentities}.
	As an extension, we introduce \emph{election processes} on team hierarchies with $k$ layers, which are the possible expressions arising from $\binom{N_k}{N_{k-1}}\dots\binom{N_3}{N_2}\binom{N_{2}}{N_1}$ upon successive applications of the trinomial revision identity~\eqref{eq:GTL}. We finish by showing that the number of election processes on team hierarchies with $k$ layers is given by the Catalan number $C_{k-1}$.
	Lifting the obtained expressions with Theorem \ref{thm:GTL} gives in turn~$C_{k-1}$ pairwise isomorphic $\SL_2(\F)$-plethysms. Yet again, these isomorphisms are far from obvious.
	
	We prove our $\SL_2(\F)$-isomorphisms by constructing
	an explicit model for the plethysms $\Delta^\lambda \Sym^d \E$ as
	subrepresentations of suitable polynomial algebras;
	these include the algebra of symmetric functions defined over $\F$:
	see~\eqref{eq:Lambda}.
	An important motivation for this model was Grinberg's proof \cite{Grinberg}
	of the Wronskian isomorphism~\eqref{eq:Wronskian},
	in which the map from the left-hand side to the right-hand side
	is described as multiplication by a Vandermonde determinant.
	We believe our model will be of general use when investigating
	plethystic isomorphisms. 
	Thus, while Theorem~\ref{thm:hook}
	proves a conjecture from~\cite{MW24}, the methods used in this paper are entirely novel,
	and of significant independent interest in their own right.
	
	We conclude by mentioning one natural question raised by Theorem~\ref{thm:hook}:
	\emph{what other instances of Stanley's Hook Content Formula
		have characteristic-free modular lifts}? We hope to address
	this in a sequel to this paper.

	\section{Preliminaries}
	\subsection{Basic definitions}
	A \emph{partition} $\lambda = (\lambda_1, \lambda_2, \ldots)$ is a weakly decreasing finite sequence of non-negative integers, ending with infinitely many zeros.
	If $\ell$ is maximal such that $\lambda_\ell \not= 0$ then we say that $\lambda$
	has \emph{length} $\ell$ and write $\lambda$ as $(\lambda_1,\ldots,\lambda_\ell)$.
	The \emph{Young diagram} of $\lambda$ is
	the set $[\lambda] = \{(i, j) : 1 \le i \le \ell, 1 \le j \le \lambda_i \}$ of \emph{boxes}.
	A \emph{tableau} 
	is a filling of the boxes of $[\lambda]$
	with entries from $\N_0$; a tableau is \emph{semistandard}
	if its rows are weakly increasing and its columns are strictly increasing.
	For examples see \S\ref{subsec:WeylHook} below.

	\subsection{Exterior powers and lower symmetric powers}\label{subsec:exteriorLowerSymmetricPowers}
	Since we later apply these functors to 
	the upper symmetric powers $\Sym^d\E$ of the natural $2$-dimensional
	representation~$E$ of $\SL_2(\F)$, as defined in \S\ref{subsec:upperSymmetricPowers} below,
	it is most convenient to take a 
	vector space of dimension $d+1$. Let $V$ be an $\F$-vector space with basis $v_0,v_1,\ldots, v_d$. 
	Let $R \in \N_0$ and let $c_1, \ldots, c_R \in \{0,1,\ldots, d\}$.
	The symmetric group $\SymG_R$ acts on elements in the canonical
	basis of $V^{\otimes R}$ by position permutation:
	\[ \sigma \cdot (v_{c_1} \otimes \cdots \otimes v_{c_R})
	= v_{c_{\sigma^{-1}(1)}} \otimes \cdots \otimes v_{c_{\sigma^{-1}(R)}}. \]
	(Note the inverse is correct: $v_{c_i}$ is found in position $\sigma(i)$
	on the right-hand side.) This action extends linearly to an action of $\SymG_R$ on
	$V^{\otimes R}$. 
    
    We define the lower symmetric powers by
	\begin{align*}
		\Sym_R V  
		&= \{ w \in V^{\otimes R} : \sigma \cdot w = w \text{ for all $\sigma \in \SymG_R$} \}.
    \end{align*}

	If $H = \{ \sigma \in \SymG_R : \sigma \cdot (v_{c_1} \otimes \cdots \otimes v_{c_R})
	= v_{c_1} \otimes \cdots \otimes v_{c_R} \} $ then
	we define
	\begin{equation} v_{(c_1,\ldots, c_R)} = \sum_\sigma \sigma \cdot (v_{c_1} \otimes \cdots \otimes
		v_{c_R}) \in \Sym_R V \label{eq:symBasis} \end{equation}
	where the sum is over a set of coset representatives for the cosets $\SymG_R/H$.
	It is clear that
	$\Sym_R V$ is spanned by these symmetric elements, and that
	$v_{(c_1,\ldots, c_R)} = v_{(c'_1,\ldots, c'_R)}$ if $(c_1,\ldots, c_R)$
	and $(c'_1,\ldots, c'_R)$ are equal up to the order of the entries.
	We therefore obtain a basis of $\Sym_R V$ by taking $c_1 \le \ldots \le c_R$.
	Note that it is essential to take coset representatives: for instance
	the symmetrization of $v_1 \otimes v_1 \otimes v_2$ by all $6$ permutations
	in $\SymG_3$ is zero if $\F$ has characteristic $2$.

    Similarly, the exterior powers $\bwedge{R} V$ are defined as the span of the elements
    \[
    v_{c_1} \wedge \cdots \wedge v_{c_R} = \sum_{\sigma \in \SymG_R}
		\sgn(\sigma) \sigma \cdot (v_{c_1} \otimes \cdots \otimes v_{c_R}).
    \]
   Moreover we obtain a basis if we require $c_1 < \ldots < c_R$.

    Note that $\SymG_0$ is trivial and so $V^{\otimes 0}\cong\Sym_0(V)\cong\bwedge{0}V\cong\F$.
	
	\subsection{Weyl functors for hook partitions}\label{subsec:WeylHook}
	We now present a simple explicit construction
	of the Weyl functors $\Delta^{(M+1,1^{N-1})}$ labelled by hook partitions.
	Given a tableau having entries
	$a_0, a_1, \ldots, a_M$ in its top row
	and $a_0, b_1, b_2, \ldots, b_{N-1}$ in its first column,
	define
	$F_\Delta(t) \in \bwedge{N} V \otimes \Sym_M V$
	by
	\begin{equation}
		F_\Delta(t) = \sum (v_{c_0} \wedge v_{b_1} \wedge \cdots \wedge v_{b_{N-1}})
		\otimes v_{c_1} \otimes \cdots \otimes v_{c_{M}} \label{eq:FDelta}
	\end{equation}
	where the sum is over all distinct tuples $(c_0, c_1, \ldots, c_M)$
	obtained by permuting the tuple $(a_0,a_1,\ldots, a_M)$.
	For example, if $M=3$, and $N=3$, and
	$t$ is the tableau
	\[ \young(0225,2,4) \]
	then $(a_0,a_1,a_2,a_3) = (0,2,2,5)$ and so there are $12$ summands in the sum
	defining $F_\Delta(t)$, but 
	all those for which $c_0 = 2$ cancel, because $v_2 \wedge v_2 \wedge v_4 = 0$. Thus
	\begin{align*}
		F_\Delta(t) &= (v_0 \wedge v_2 \wedge v_4) \otimes (v_2 \otimes v_2 \otimes v_5
		+ v_2 \otimes v_5 \otimes v_2 + v_5 \otimes v_2 \otimes v_2)
		\\ &+ (v_5 \wedge v_2 \wedge v_4) \otimes (v_0 \otimes v_2 \otimes v_2 + 
		v_2 \otimes v_0 \otimes v_2 + v_0 \otimes v_0 \otimes v_2) .\end{align*}
Expressed 
in the canonical basis of $\bwedge{3} V \otimes \Sym_3 V$
this is $(v_0 \wedge v_2 \wedge v_4) \otimes v_{(2,2,5)} +
(v_2 \wedge v_4 \wedge v_5) \otimes v_{(0,2,2)}$.

\subsubsection*{Semistandard basis}
Let $\SSYT_{\le d}(M+1,1^{N-1})$  
denote the set of semistandard tableaux of shape $(M+1,1^{N-1})$ whose
entries lie in $\{0,1,\ldots, d\}$. 
By either
\cite[Theorem II.3.16]{AkinBuchsbaumWeyman}, \cite[Theorem~6.1]{McDowellWeyl}
or \cite[Proposition~3.13]{McDowellThesis},
the Weyl module $\Delta^{(M+1,1^{N-1})}(V)$ is 
the subrepresentation of
$\bwedge{N} V \otimes V^{\otimes M}$ 
with basis all $F_\Delta(t)$ for $t \in \SSYT_{\le d}(M+1, 1^{N-1})$.

%
%

\begin{Example}[Lower symmetric and exterior powers]
	It follows immediately from the definitions in \S\ref{subsec:exteriorLowerSymmetricPowers}
	and~\eqref{eq:FDelta} that $\Delta^{(1^R)}(V) = \bwedge{R} V$
	and $\Delta^{(R)}(V) = \Sym_R V$.
\end{Example}

\begin{Example}
	If $M=1, N=2$, and $d=2$, then $\Delta^{(2,1)}(V)$ 
	has as a basis all $F_\Delta(t)$ for $t$ one of the eight semistandard tableaux
	\[ \young(00,1)\, , \ \young(01,1)\, , \ \young(00,2)\, , \ \young(01,2)\, , 
	\ \young(02,1)\, , \ \young(02,2)\, , \ \young(11,2)\, , \ \young(12,2)\, . \]
	When $\F = \C$, the restriction of $\Delta^{(2,1)}(V)$ to the special
	unitary group $\mathrm{SU}_3(\C)$ is the famous eight-fold way representation (see \cite[page 179]{FH13}).
	If instead~$\F$ has characteristic~$3$ then 
	\begin{align*} F_\Delta\bigl( \, \young(02,1) \, \bigr) - 
		F_\Delta\bigl( \, \young(01,2) \, \bigr) &=
		(v_0 \wedge v_1) \otimes v_2 + (v_1 \wedge v_2) \otimes v_0 
		+ (v_2 \wedge v_0) \otimes v_1 
		\\ &= \sum_{\sigma \in \SymG_3} \mathrm{sgn}(\sigma) \sigma \cdot (v_0 \otimes v_1 \otimes v_2).
	\end{align*}
	Since the span of this vector affords the $1$-dimensional
	determinant representation of $\GL(V)$,
	in this case, $\Delta^{(2,1)}(V)$ is  reducible.
\end{Example}

\subsubsection*{Character}
Suppose that the tableau $t$ has exactly $a_i(t)$ entries equal to $i$ for each
$i \in \{0,1,\ldots, d\}$. Then from~\eqref{eq:FDelta} 
the action of the diagonal matrix $\diag(\gamma_0,\gamma_1,\ldots, \gamma_d)$ is given by
\[ \diag(\gamma_0,\gamma_1,\ldots, \gamma_d)F_\Delta(t) =
\gamma_0^{a_0(t)} \gamma_1^{a_1(t)} \hskip-1pt
\ldots \gamma_d^{a_d(t)}F_\Delta(t).\]
Thus each $F_\Delta(t)$ is a simultaneous eigenvector
for the subgroup of $\GL(V)$ of diagonal matrices (in our chosen basis).
The formal character
of $\Delta^{(M+1,1^{N-1})}(V)$ is therefore the Schur  polynomial $s_{(M+1,1^{N-1})}$ 
in the variables $x_0,x_1,\ldots, x_{d}$:
\begin{equation}
	\label{eq:hookCharacter} s_{(M+1,1^{N-1})}(x_0,x_1,\ldots, x_d) = \!
	\sum_{t \in \SSYT_{\le d}(M+1,1^{N-1})}\!\! x_0^{a_0(t)}
	x_1^{a_1(t)} \hskip-1pt\ldots x_d^{a_d(t)}. \end{equation}
Later, in \S\ref{subsec:symmetric-polys}, we present an algebraic definition of the Schur polynomial $s_\lambda$ for a general partition $\lambda$. For the equivalence of the two definitions
(in the case $\F = \C$)
see \cite[Theorem~7.15.1]{StanleyEC2}.

\subsubsection*{Dimension}
The following proposition on the dimension of $\Delta^{(M+1,1^{N-1})}(V)$
can also be proved by setting $q=1$ in Stanley's Hook Content Formula 
\cite[Corollary 7.21.4]{StanleyEC2}, or, with more work, from Weyl's Dimension Formula.
We give a proof to make this article self-contained.
Recall that $\dim V = d+1$. 

\begin{Proposition}\label{prop:dim-delta}
	We have $\dim \Delta^{(M+1,1^{N-1})}(V) = \binom{d+M+1}{M+N} \binom{M+N-1}{M}$.
\end{Proposition}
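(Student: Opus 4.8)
The plan is to count the semistandard tableaux in $\SSYT_{\le d}(M+1,1^{N-1})$ directly, since by the semistandard basis described in \S\ref{subsec:WeylHook} this number equals $\dim \Delta^{(M+1,1^{N-1})}(V)$. A semistandard tableau of hook shape $(M+1,1^{N-1})$ with entries in $\{0,1,\ldots,d\}$ is determined by two independent pieces of data: the strictly increasing column of length $N$ (the arm box $(1,1)$ together with the $N-1$ leg boxes), and the weakly increasing top row of length $M+1$. The only interaction between them is at the shared corner box $(1,1)$: its entry must be the smallest in the column and the smallest in the row simultaneously.

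First I would fix the entry $a$ in the corner box $(1,1)$ and count, for each $a \in \{0,1,\ldots,d\}$, the ways to complete the tableau. The leg below the corner is a strictly increasing sequence of length $N-1$ drawn from $\{a+1,\ldots,d\}$, giving $\binom{d-a}{N-1}$ choices; the arm to the right of the corner is a weakly increasing sequence of length $M$ drawn from $\{a,a+1,\ldots,d\}$, giving $\binom{d-a+M}{M}$ choices (a standard stars-and-bars count). Hence
\[
\dim \Delta^{(M+1,1^{N-1})}(V) = \sum_{a=0}^{d} \binom{d-a}{N-1}\binom{d-a+M}{M} = \sum_{k=0}^{d} \binom{k}{N-1}\binom{k+M}{M},
\]
after reindexing $k = d-a$.

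It then remains to evaluate this sum in closed form. The summand $\binom{k}{N-1}$ vanishes for $k < N-1$, so the sum effectively runs from $k = N-1$ to $d$. Writing $\binom{k+M}{M} = \binom{k+M}{k}$ and viewing $\binom{k}{N-1}$ as a polynomial in $k$, one can use the Vandermonde-type identity $\binom{k}{N-1}\binom{k+M}{k} = \binom{k+M}{M+N-1}\binom{M+N-1}{M}$ (which follows by expanding both sides in factorials; the factor $\binom{M+N-1}{M}$ is constant in $k$) to pull out the constant $\binom{M+N-1}{M}$, reducing the problem to the hockey-stick identity $\sum_{k} \binom{k+M}{M+N-1} = \binom{d+M+1}{M+N}$. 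Combining these gives the claimed product $\binom{d+M+1}{M+N}\binom{M+N-1}{M}$.

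The only mild obstacle is the bookkeeping in the intermediate binomial identity, but this is entirely routine once one rewrites everything in terms of factorials; alternatively one can give a clean bijective proof of $\sum_{k=0}^{d}\binom{k}{N-1}\binom{k+M}{M} = \binom{d+M+1}{M+N}\binom{M+N-1}{M}$ by choosing a corner position and then independently choosing the leg and arm, which is precisely the tableau count above restated. No step here is deep; the proposition is genuinely a combinatorial identity, included (as the authors note) only to keep the paper self-contained.
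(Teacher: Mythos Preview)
Your argument is correct: conditioning on the corner entry and then applying the factorial identity
\[
\binom{k}{N-1}\binom{k+M}{M}=\binom{k+M}{M+N-1}\binom{M+N-1}{M}
\]
followed by the hockey-stick identity is a perfectly sound route to the closed form. The paper, however, takes a genuinely different path: rather than summing over the corner value, it constructs a single bijection. Given a tableau with first row $a_0\le a_1\le\cdots\le a_M$ and first column $a_0<b_1<\cdots<b_{N-1}$, it shows that the $(M+N)$-element set
\[
S=\{a_0,\,a_1+1,\ldots,\,a_M+M\}\cup\{b_1+s_1,\ldots,\,b_{N-1}+s_{N-1}\}\subseteq\{0,1,\ldots,d+M\}
\]
(where $s_i$ counts how many $a_j$ with $j\ge 1$ lie strictly below $b_i$) has all elements distinct, and that the tableau is recovered from the pair $(S,\,\{a_1+1,\ldots,a_M+M\})$, the second component being an arbitrary $M$-subset of $S\setminus\{\min S\}$. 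This gives the product $\binom{d+M+1}{M+N}\binom{M+N-1}{M}$ directly, with no summation or binomial algebra. Your approach is arguably more transparent for a reader who just wants the number; the paper's bijection is slicker in that it avoids any identity manipulation and exhibits the two binomial factors as two genuinely independent choices. It is also mildly amusing that the intermediate identity you invoke is an instance of the trinomial revision~\eqref{eq:GTL} that the paper later categorifies.
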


\begin{proof}
	We have seen that the dimension is the number of 
	semistandard tableaux of shape $(M+1,1^{N-1})$ having entries
	from $\{0,1\ldots, d\}$. Keeping the existing notation, suppose that
	$t$ has $a_0 \le a_1 \le \ldots \le a_M$ in its first row and $b_0< b_1 < \ldots< b_{N-1}$
	in its first column. 
	For each $i \in \{1,\ldots, N-1\}$ let
	$s_i = | \{j \in \{1,\ldots, M\} : a_j < b_i \} |$.
	Note that $s_1 \le \ldots \le s_{N-1} \le M$.
	We now show that 
	\[ S = \{b_1+s_1,\ldots, b_{N-1}+s_{N-1} \} \cup \{a_0, a_1 + 1 , \ldots, a_M + M \} \]
	has $M+N$ distinct elements. In the two cases below, $\{a_1,\ldots a_j\}$ should be read
	as a multiset:
	\begin{bulletlist}
		\item if $a_j < b_i$ then
		$s_i \ge |\{a_1,\ldots, a_j\}| = j$, and so $j \le s_i$ and $a_j + j < b_i + s_i$;
		\item if $a_j \ge b_i$ then
		$s_i < |\{a_1,\ldots, a_j \}| = j$, and so $j > s_i$ and $a_j + j > b_i + s_i$.
	\end{bulletlist}
	The maximum value in $S$
	is either $b_{N-1} + s_{N-1} \le d + M$, or $a_M + M \le d+M$.
	It follows that $S$ is an $(M+N)$-subset
	of $\{0,1,\ldots, d+M\}$.
	It 
	can be chosen in $\binom{M+d+1}{M+N}$ ways. Moreover,
	the tableau $t$ is uniquely determined by $S$ and its $M$-subset $\{a_1+1,\ldots, a_M + M\}$.
	Since this $M$-subset can be freely chosen from $S \backslash \{\min S\}$
	and $|S \backslash \{\min S\}| = M+N-1$,
	it follows that there are precisely $\binom{d+M+1}{M+N} \binom{M+N-1}{M}$
	semistandard tableaux.
\end{proof}

\subsection{The multiplication map: hook Weyl modules as images}
Each $F_\Delta(t)$ is defined in~\eqref{eq:FDelta}
by symmetrization over the entire top row in the
tableau $t$. Hence each $F_\Delta(t)$ is symmetric
with respect to the final $M$ tensor positions in $V^{\otimes (M+N)}$. Hence
\begin{equation} F_\Delta(t) \in \bwedge{N} \hskip-1pt V \otimes \Sym_M\! V 
\end{equation}
where the right-hand side is a subspace of $\bwedge{N} \hskip-1pt V \otimes V^{\otimes M}$.
Recall that $V$ has the chosen basis $v_0,v_1,\ldots, v_d$.
For $M \in \N$ let
\begin{equation}\label{eq:muM}
	\mu_M : \bwedge{N} \hskip-1pt V \otimes V^{\otimes M} \rightarrow \bwedge{N+1} \hskip-1pt V \otimes V^{\otimes M-1}
\end{equation}
be the  map defined by linear extension of
\[\begin{split} v_{c_0} \wedge v_{b_1} \wedge \cdots \wedge v_{b_{N-1}}
	&\otimes v_{c_1} \otimes v_{c_2}\otimes \cdots \otimes v_{c_M} \\
	&\vlongmapsto{$\scriptstyle \mu_M$} v_{c_0} \wedge v_{b_1} \wedge \cdots \wedge v_{b_{N-1}} 
	\wedge v_{c_1} \otimes v_{c_2} \otimes \cdots \otimes v_{c_M}\end{split}
\]  
(with $\mu_0$ being the zero map) and let $\delta_M$ denote the restriction of $\mu_M$ to
$\bwedge{N} \hskip-1pt V \otimes \Sym_M V$. 
Since the final $M-1$ tensor factors $v_{c_2} \otimes \cdots \otimes v_{c_M}$ 
are the same on each side of the defining equation above, it is clear
that $\delta_M$ has image contained in   $\bwedge{N+1} \hskip-1pt V \otimes \Sym_{M-1}V$.
Thus the image of $\delta_M$ is symmetric under position permutation
in the $M-1$ positions $N+2,\ldots, N+M$.
But it is clear from the definition
of $\mu_M$ that the image of $\delta_M$
is also symmetric under swapping positions $1$ and $N+1$. (These positions have $v_{c_0}$
and $v_{c_1}$ in the right-hand side above.)
Therefore,
using the canonical basis element $v_{(a_1,\ldots, a_M)}$ defined
in~\eqref{eq:symBasis}, we have
\[ \delta_M (v_{b_1} \wedge \cdots \wedge v_{b_N} \otimes v_{(a_1,\ldots, a_M)} )
= F_\Delta(\tilde{t}) \]
where $\tilde{t}$ is the tableau of shape $(M,1^N)$ having first row $a_1,\ldots, a_M$ 
read left to right and first column $a_1, b_1, \ldots, b_N$ read top to bottom. We conclude that $\im \delta_M = \Delta^{(M,1^N)}(V)$.

\subsection{Hook Weyl modules as kernels}
The following lemma 
is known to experts: for instance, it follows from
(2.1) of \cite{MaliakisStergiopoulou}, where
the authors use divided symmetric powers
to show that the maps $\delta_M$ define a chain complex
dual to a suitable partially symmetrized Koszul complex.
We give an elementary self-contained proof, including full details
in a routine calculation to save the reader some effort.

\begin{Lemma}\label{lemma:DeltaKernel}
	We have $\ker \delta_M =   
	\Delta^{(M+1,1^{N-1})}(V)$ for each $M \in \N_0$.
\end{Lemma}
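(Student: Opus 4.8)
The plan is to compare the module $\Delta^{(M+1,1^{N-1})}(V)$, realized via the vectors $F_\Delta(t)$ for $t \in \SSYT_{\le d}(M+1,1^{N-1})$, with $\ker \delta_M$, where $\delta_M \colon \bwedge{N} V \otimes \Sym_M V \to \bwedge{N+1} V \otimes \Sym_{M-1} V$ is the restriction of the ``wedge-in'' map $\mu_M$. The containment $\Delta^{(M+1,1^{N-1})}(V) \subseteq \ker \delta_M$ should be the easy direction: applying $\delta_M$ to $F_\Delta(t)$ wedges each top-row entry $v_{c_0}$ onto the column, and after summing over all rearrangements $(c_0,c_1,\ldots,c_M)$ of $(a_0,\ldots,a_M)$, each resulting term $v_{c_0} \wedge v_{b_1} \wedge \cdots \wedge v_{b_{N-1}} \wedge v_{c_1} \otimes v_{c_2} \otimes \cdots \otimes v_{c_M}$ pairs off with another differing by a transposition of $c_0$ and $c_1$, which changes the sign of the wedge but leaves the symmetrized tail unchanged; so the image is zero. (One must be a little careful with multiplicities so that the cancellation is genuinely over $\Z$, but this is the kind of routine bookkeeping the statement invites.)

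For the reverse inclusion $\ker \delta_M \subseteq \Delta^{(M+1,1^{N-1})}(V)$ I would argue by dimension count together with exactness of the relevant piece of the complex, or alternatively by a direct spanning argument. The cleanest route: establish that the sequence
\[
0 \longrightarrow \Delta^{(M+1,1^{N-1})}(V) \longrightarrow \bwedge{N} V \otimes \Sym_M V \xrightarrow{\ \delta_M\ } \bwedge{N+1} V \otimes \Sym_{M-1} V
\]
is exact at the middle term by showing $\im \delta_M = \Delta^{(M,1^N)}(V)$ (already proved in the preceding subsection) and then comparing dimensions: $\dim \bwedge{N} V \otimes \Sym_M V = \binom{d+1}{N}\binom{d+M}{M}$, $\dim \Delta^{(M,1^N)}(V) = \binom{d+M+1}{M+N}\binom{M+N-1}{M-1}$ by Proposition~\ref{prop:dim-delta} (with $M$ replaced by $M-1$ and $N$ by $N+1$, noting $\Delta^{(M,1^N)} = \Delta^{((M-1)+1, 1^{(N+1)-1})}$), and $\dim \Delta^{(M+1,1^{N-1})}(V) = \binom{d+M+1}{M+N}\binom{M+N-1}{M}$ again by Proposition~\ref{prop:dim-delta}. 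Then one checks the identity
\[
\binom{d+1}{N}\binom{d+M}{M} = \binom{d+M+1}{M+N}\binom{M+N-1}{M} + \binom{d+M+1}{M+N}\binom{M+N-1}{M-1},
\]
which follows from Pascal's rule $\binom{M+N-1}{M} + \binom{M+N-1}{M-1} = \binom{M+N}{M}$ together with a Vandermonde-type identity. Since $\Delta^{(M+1,1^{N-1})}(V) \subseteq \ker \delta_M$ and the rank-nullity count forces equality of dimensions, the inclusion is an equality.

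The main obstacle I anticipate is making the cancellation argument for $\delta_M \circ F_\Delta = 0$ fully rigorous over an arbitrary field, in particular characteristic $p$: the subtlety is that $F_\Delta(t)$ is defined as a sum over \emph{distinct} rearrangements of $(a_0,\ldots,a_M)$ (to avoid the vanishing phenomenon noted in \S\ref{subsec:exteriorLowerSymmetricPowers}), so when I apply $\delta_M$ and try to pair a term with $c_0 = \alpha$, $c_1 = \beta$ against the term with $c_0 = \beta$, $c_1 = \alpha$, I need to track how many rearrangements give rise to each, and verify the pairing respects multiplicities so the total really is zero with integer coefficients. The case where the column already contains one of the values, so that some wedges vanish, and the case where $c_0 = c_1$ (self-paired, but then the wedge is zero anyway after wedging $v_{c_1}$ next to $v_{c_0}$ — wait, no: $v_{c_0}$ is already \emph{in} the wedge, so wedging $v_{c_0}$ again kills it) need separate attention. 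Once this bookkeeping is pinned down, the dimension-count half is essentially formal, relying only on Proposition~\ref{prop:dim-delta}, the identification $\im \delta_M = \Delta^{(M,1^N)}(V)$, and elementary binomial identities; so I would present the cancellation computation in full detail and keep the dimension comparison brief.
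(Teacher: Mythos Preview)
Your overall strategy matches the paper's exactly: establish the containment $\Delta^{(M+1,1^{N-1})}(V)\subseteq\ker\delta_M$, use the already-proved fact $\im\delta_M=\Delta^{(M,1^N)}(V)$, and finish by a dimension count plus rank--nullity.

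There is one genuine slip. Substituting $M\mapsto M-1$, $N\mapsto N+1$ in Proposition~\ref{prop:dim-delta} gives
\[
\dim\Delta^{(M,1^N)}(V)=\binom{d+M}{M+N}\binom{M+N-1}{M-1},
\]
not $\binom{d+M+1}{M+N}\binom{M+N-1}{M-1}$ as you wrote. With your (incorrect) value the two summands share the factor $\binom{d+M+1}{M+N}$, Pascal collapses them to $\binom{d+M+1}{M+N}\binom{M+N}{M}$, and trinomial revision then yields $\binom{d+1}{N}\binom{d+M+1}{M}$, which does \emph{not} equal $\binom{d+1}{N}\binom{d+M}{M}$. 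So the displayed identity you plan to check is false as stated. With the correct second summand the identity holds but is no longer a one-line Pascal consequence; the paper spends five lines of manipulation (using trinomial revision~\eqref{eq:GTL} midway) to verify it, and you should expect to do the same.

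On the containment step, the paper sidesteps all the multiplicity bookkeeping you anticipate. Rather than compute $\delta_M(F_\Delta(t))$ directly, it observes that each $F_\Delta(t)$, being symmetrized over the whole top row, lies in the span of tensors of the form
\[
v_e\wedge v_{b_1}\wedge\cdots\wedge v_{b_{N-1}}\otimes v_{e'}\otimes v_{c_2}\otimes\cdots\otimes v_{c_M}
\;+\;
v_{e'}\wedge v_{b_1}\wedge\cdots\wedge v_{b_{N-1}}\otimes v_{e}\otimes v_{c_2}\otimes\cdots\otimes v_{c_M}
\]
for $e\neq e'$, together with the diagonal tensors having $e=e'$; both types are visibly killed by $\mu_M$. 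This avoids any characteristic-$p$ worries, so you can replace your cancellation argument with this two-line observation.
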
  
\begin{proof}
	Note that, by its definition by symmetrization over the 
	top row of a tableau, $\Delta^{(M+1,1^{N-1})}(V)$ 
	is a subspace of the span
	of tensors of the two forms
	\[v_{e} \wedge v_{b_1} \wedge \cdots \wedge v_{b_{N-1}} \otimes v_{e'} \otimes v_{c_2}\otimes \dots
	\otimes  v_{c_M}
	+     v_{e'} \wedge v_{b_1} \wedge \cdots \wedge v_{b_{N-1}} \otimes v_{e} \otimes v_{c_2} \otimes \dots
	\otimes v_{c_M} \]
	for $e\not= e'$
	and   $v_{e} \wedge v_{b_1} \wedge \cdots \wedge v_{b_{N-1}} \otimes v_{e} \otimes v_{c_2} \otimes \dots
	\otimes v_{c_M}$.
	Both of these vectors are in the kernel of 
	$\delta_M$. Therefore we have
	\begin{equation}\label{eq:deltaKernel} \ker \delta_M \supseteq  \Delta^{(M+1,1^{N-1})}(V). \end{equation}
	We have already observed that $\im \delta_M = \Delta^{(M,1^N)}(V)$.
	By Proposition~\ref{prop:dim-delta} and an instance of~\eqref{eq:GTL} to get the third
	equality, we have
	\begin{align*} \dim{} &\Delta^{(M+1,1^{N-1})}(V) + \dim  \Delta^{(M,1^N)}(V) \\
		&= \binom{d+M+1}{M+N}\binom{M+N-1}{M} + \binom{d+M}{M+N}\binom{M+N-1}{M-1} \\
		&= \frac{d+M+1}{M+N} \binom{d+M}{M+N-1}\binom{M+N-1}{M} + \frac{M}{M+N}
		\binom{d+M}{M+N}\binom{M+N}{M} \\
		&= \frac{d+M+1}{M+N} \binom{d+M}{M} \binom{d}{N-1} +
		\frac{M}{M+N}\binom{d+M}{M} \binom{d}{N} \\
		&= \frac{ (d+M+1) N+ M(d-N+1) }{(M+N)(d+1)} \binom{d+M}{M} \binom{d+1}{N} \\
		&= \binom{d+M}{M} \binom{d+1}{N}  \\
		&= \dim \bigl(\bwedge{N} V \otimes \Sym_M V\bigr)
	\end{align*}
	where the final equality holds 
	because by~\eqref{eq:symBasis}, $\Sym_M V$ has a basis indexed
	by the $M$-multisets of $\{0,1,\ldots, d\}$, of which there are $\binom{d+M}{M}$;
	see~\eqref{eq:qSym}.
	By rank-nullity 
	\[ \dim \ker \delta_M + \dim \im \delta_M = \dim \bigl(\bwedge{N} V \otimes \Sym_M V\bigr).
	\]
	Since \smash{$\im \delta_M = \Delta^{(M,1^N)}(V)$}
	it follows that $\dim \ker\delta_M = \dim \Delta^{(M+1,1^{N-1})}(V
	)$
	and so equality holds in~\eqref{eq:deltaKernel}.
\end{proof}

%

\subsection{Upper symmetric powers}\label{subsec:upperSymmetricPowers}
Fixing a  basis $X$, $Y$ of $E$ we identify
$\Sym^d \E = \langle X^d, X^{d-1}Y ,\ldots, Y^d \rangle$ with degree $d$
homogeneous polynomials in the variables $X$ and~$Y$. The action of $\GL_2(\F)$ and $\SL_2(\F)$ is given explicitly by
\begin{align}\label{action-of-SL_2}
	\begin{pmatrix}
		\alpha & \beta \\
		\gamma & \delta
	\end{pmatrix}\cdot P(X,Y)=P(\alpha X+\gamma Y, \beta X+\delta Y).
\end{align}
Thus the matrix above acts as the unique algebra automorphism
satisfying $X \mapsto \alpha X + \gamma Y$ and $Y \mapsto \beta X + \delta Y$.

\begin{Example}\label{ex:Sym}
We pause to give an example illustrating that the
existence of modular isomorphisms, such as those in our two main theorems,
is a subtle question over fields of prime characteristic.
	The actions of $\GL_2(\F)$ on $\Sym^2 \E$ and $\Sym_2 \E$ are given by 
	the explicit homomorphisms below.
	\[ \begin{pmatrix}
		\alpha & \beta \\
		\gamma & \delta
	\end{pmatrix} \mapsto
	\bordermatrix{ & \scriptstyle X^2 &\scriptstyle Y^2 &\scriptstyle XY \cr
		& \alpha^2 & \beta^2 & \alpha\beta \cr & \gamma^2 & \delta^2 & \gamma\delta \cr & 2\alpha\gamma & 2\beta\delta & \alpha\delta + \beta\gamma } ,\ \ 
	\begin{pmatrix}
		\alpha & \beta \\
		\gamma & \delta
	\end{pmatrix} \mapsto
	\bordermatrix{ &\scriptstyle X\otimes X & \scriptstyle Y \otimes Y & \scriptstyle X \otimes Y + Y \otimes X \cr
		& \alpha^2 & \beta^2 & 2\alpha\beta \cr & \gamma^2 & \delta^2 & 2\gamma\delta \cr & \alpha\gamma & \beta\delta & \alpha\delta + \beta\gamma }\]
If $\F$ has characteristic $2$ then
$\langle X^2, Y^2 \rangle \subseteq \Sym^2 E$
is the unique non-trivial proper submodule of $\Sym^2 E$; the quotient by this submodule
is isomorphic to the determinant representation.
Dually, $\Sym_2 E$ has the same composition factors, but in the opposite
order. Since each module is indecomposable, they are
non-isomorphic. We leave it to the reader to verify
that, as representations of $\SL_2(\F)$, we have $(\Sym_2 E)^\star \cong \Sym^2 E$ \cite[\S2.2]{McDowellWildon}.
\end{Example}

\subsection{$\SL_2(\F)$-plethysms}\label{subsec:plethysm}
In the previous subsection we saw that $\Sym^d\E$ has
ordered basis $X^d$, $X^{d-1} Y, \ldots, Y^d$. 
Represented in this basis, the 
diagonal matrix $\diag(1,q)\in\GL_2(\F)$ acts on $\Sym^d \E$ as the diagonal
matrix $\diag(1,q,\ldots, q^{d})$. Setting $x_i = q^i$ in~\eqref{eq:hookCharacter}
we obtain the $\GL_2(\F)$ character of $\Delta^{(M+1,1^{N-1})} \Sym^d \E$:
\begin{equation}
\label{eq:qHookCharacter} 
\tr_{\Delta^{(M+1,1^{N-1})} \Sym^d \E} \mathrm{diag}(1,q) = s_{(M+1,1^{N-1})}(1,q,\ldots, q^{d}).
\end{equation}
Since $s_{(d)}(1,q) = 1 + q + \cdots + q^{d}$, the right-hand side
is the plethysm product of Schur functions
$s_{(M+1,1^{N-1})} \circ s_{(d)}$, evaluated at $1$ and $q$. Similar reasoning follows for the action of the diagonal matrix $\diag(q^{-1}, q)\in\SL_2(\F)$, which justifies our term
`$\SL_2(\F)$-plethysm' for $\SL_2(\F)$-representations
of the form \smash{$\Delta^{(M+1,1^{N-1})} \Sym^d \E$}.
More generally, composition of Weyl functors corresponds
to the plethysm product on general Schur functions:
see for instance \cite[Appendix~A]{Macdonald}. 

\subsection{$q$-binomial coefficients}
Let $[n]_q = (q^n-1)/(q-1) = 1+ q + \cdots + q^{n-1}$;
note that $[d+1]_q$ is the $\GL_2(\C)$-character of $\Sym^d \E$ on $\mathrm{diag}(1,q)$
and $q^{-d}[d+1]_{q^2}$ is the $\SL_2(\C)$-character of $\Sym^d \E$ on
$\mathrm{diag}(q^{-1}, q)$.
Let $[n]_q! = [n]_q [n-1]_q \cdots [1]_q$.
We define the $q$-\emph{binomial coefficient} $\qbinom{n}{m}$ by
\begin{equation}
\qbinom{n}{m} = \frac{[n]_q!}{[m]_q![n-m]_q!  }
\end{equation}
for $0 \le m \le n$. Observe that $\qbinom{n}{m}$ specializes to the binomial
coefficient $\binom{n}{m}$ on setting $q=1$.
It is well known that \smash{$\qbinom{n}{m}$} enumerates
partitions in the $(n-m) \times m$ rectangle by their size,
or equivalently, multisets of $\{0,1,\ldots, m\}$ of size $n-m$
by their sum of entries
(this follows easily from \cite[Proposition 7.8.3]{StanleyEC2}).
Thus 
\begin{align} \qbinom{m+d}{m} &= s_{(m)}(1,q,\ldots, q^d).\label{eq:qSym}
\intertext{Similarly $q^{\binom{m}{2}} \qbinom{d+1}{m}$ enumerates
	$m$-subsets of $\{0,1,\ldots, d\}$ by their sum of entries (see \cite[Proposition 1.3.19]{StanleyEC1})
	and so, using that $s_{(1^m)}$ is the elementary symmetric function of degree $m$,
	we obtain}
q^{\binom{m}{2}} \qbinom{d+1}{m} &= s_{(1^m)}(1,q,\ldots, q^d)\label{eq:qWedge} 
.\end{align}
In particular, by setting $q=1$ in~\eqref{eq:qHookCharacter}
and specializing $M$ and $N$ appropriately,~\eqref{eq:qSym} and~\eqref{eq:qWedge} imply that
\begin{equation}
\label{eq:dim-plet} 
\dim \Sym_a\Sym^b\E = \binom{a+b}{a}, \quad
\dim \bwedge{a} \!\Sym^b \E = \binom{b+1}{a}.
\end{equation}
Of course, these formulae can also be proved directly by counting the $a$-multisubsets
and the $a$-subsets of $\{0,1,\ldots, b\}$, respectively.

\subsection{Lifting $\SL_2(\F)$-isomorphisms to $\GL_2(\F)$-isomorphisms}
The following result is basic: see for instance \cite[Lemma 3.5]{PagetWildon21}
for a special case. We refer the reader to \cite{Green} for background on polynomial
representations. All we need for our purposes is that
$\Delta^\lambda \Sym^d \E$ has polynomial degree $|\lambda| d$.

\begin{Lemma}\label{lemma:SL2toGL2}
Let $\F$ be an infinite field and let $V$ and $W$ be polynomial representations
of $\GL_2(\C)$ of equal degrees.
If $V \cong_{\SL_2(\F)} W$
then $V \cong_{\GL_2(\F)} W$.
\end{Lemma}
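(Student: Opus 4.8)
The plan is to reduce the statement to a representation-theoretic fact about polynomial characters of the diagonal torus of $\GL_2$. Recall that for an infinite field $\F$, a polynomial representation of $\GL_2(\F)$ of degree $n$ is, in the language of Green, a module for the Schur algebra $S(2,n)$, and its isomorphism class is determined by its composition factors — or, when $\F$ has characteristic zero, simply by its character. But the key point that makes the lemma true in all characteristics is that the polynomial category of $\GL_2(\F)$ is controlled by the diagonal subgroup together with a single extra group element, so one does not need full character theory.

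First I would recall that, since $\F$ is infinite, the polynomial functions on $\GL_2(\F)$ separate points, and a degree-$n$ polynomial representation $V$ extends to a module for $S(2,n)$; moreover the subalgebra of $S(2,n)$ generated by the image of the diagonal torus $T = \{\diag(s,t)\}$ decomposes $V$ into weight spaces $V = \bigoplus_{(i,j)} V_{(i,j)}$ where $i + j = n$. The multiplicities $\dim V_{(i,j)}$ are exactly the coefficients of the formal character $\sum_{(i,j)} (\dim V_{(i,j)}) s^i t^j$. The hypothesis $V \cong_{\SL_2(\F)} W$ gives an $\SL_2(\F)$-isomorphism $\varphi$; restricting to the subtorus $\{\diag(s, s^{-1})\} \subseteq \SL_2(\F)$ shows that $\dim V_{(i,j)} = \dim W_{(i,j)}$ whenever $i - j$ takes a given value, i.e. the characters of $V$ and $W$ agree after the substitution $s \mapsto u, t \mapsto u^{-1}$. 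Since $V$ and $W$ both have pure polynomial degree $n$, every weight $(i,j)$ appearing satisfies $i + j = n$, so the map $(i,j) \mapsto i - j$ is injective on the set of possible weights; hence the full $\GL_2$-characters of $V$ and $W$ coincide.

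Next I would upgrade equality of characters to an honest $\GL_2(\F)$-isomorphism. In characteristic zero this is immediate by semisimplicity. In general, I would invoke the structure of polynomial representations of $\GL_2$: the indecomposable tilting (or, more elementarily, the standard dimension-count via the $\SL_2$-isomorphism already in hand) shows that $\varphi \colon V \to W$ can be corrected to a $\GL_2(\F)$-map. Concretely, pick the scalar matrices $z \cdot \id \in \GL_2(\F)$; on a degree-$n$ polynomial representation $z \cdot \id$ acts as the scalar $z^n$, hence commutes with everything, so the obstruction to $\GL_2(\F)$-equivariance of $\varphi$ lies entirely in how $\varphi$ interacts with a single coset representative of $\GL_2(\F)/(\SL_2(\F) \cdot Z)$, say $\diag(\pi, 1)$ for a transcendental or generic $\pi \in \F^\times$ (using that $\F$ is infinite). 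Averaging is not available in positive characteristic, so instead I would argue that $V$ and $W$, having equal characters and being modules over $S(2,n)$ whose simple modules are labelled by their highest weights with multiplicity one in the appropriate Grothendieck-group sense for $\GL_2$, must be isomorphic as $S(2,n)$-modules: for $\GL_2$ the decomposition numbers are completely known (via the linkage principle, a Weyl module $\Delta(\lambda)$ has at most two composition factors), and a short exact-sequence bookkeeping shows equality of characters forces equality of composition-factor multiplicities, and then equality of module structure follows because $\varphi$ already realises the $\SL_2$-isomorphism and the $\GL_2$-action differs only by the central character, which is pinned down by the degree $n$.

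The main obstacle is the last step: passing from equal characters to a genuine $\GL_2(\F)$-isomorphism in positive characteristic, where one cannot simply average a given $\SL_2(\F)$-isomorphism over the quotient torus. The clean way around it is to observe that a degree-$n$ polynomial representation of $\GL_2(\F)$ restricted to $\SL_2(\F)$ loses exactly the information of the central character, which is forced to be $z \mapsto z^n$; therefore the restriction functor from degree-$n$ polynomial $\GL_2(\F)$-modules to $\SL_2(\F)$-modules is \emph{fully faithful} on this subcategory (any $\SL_2(\F)$-map automatically commutes with the scalars $z\cdot\id$, hence with all of $\GL_2(\F) = \SL_2(\F)\cdot Z \cdot \langle \diag(\pi,1)\rangle$ once one checks the single generator $\diag(\pi,1)$, which follows since $\diag(\pi,1) = \diag(\pi^{1/2},\pi^{-1/2})\cdot \diag(\pi^{1/2},\pi^{1/2})$ over the algebraic closure and one descends using that the map is defined over $\F$). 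Granting that, the given $\SL_2(\F)$-isomorphism $\varphi$ \emph{is} a $\GL_2(\F)$-isomorphism, and we are done. I would spell out the full-faithfulness claim carefully, as it is the crux; everything else is bookkeeping with polynomial degrees.
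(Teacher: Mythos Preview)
Your third paragraph contains the correct argument and it is exactly the paper's proof: over the algebraic closure one has $\GL_2(\bar\F)=\SL_2(\bar\F)\cdot Z$, the centre $Z$ acts on any degree-$n$ polynomial representation by $\alpha\mapsto\alpha^n$, and therefore the given $\SL_2(\F)$-isomorphism $\varphi$ already commutes with every element of $\GL_2(\bar\F)$, hence in particular with every element of $\GL_2(\F)$. That is the whole proof; it is three lines.

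Everything in your first two paragraphs is an unnecessary detour. You do not need weight-space decompositions, Schur algebras, tilting modules, decomposition numbers, or the linkage principle: none of that machinery is relevant, because you are not trying to produce a new isomorphism from abstract data but merely to check that the \emph{given} $\SL_2(\F)$-map $\varphi$ is already $\GL_2(\F)$-equivariant. In particular, the line of reasoning ``equal characters $\Rightarrow$ equal composition factors $\Rightarrow$ isomorphic'' is a dead end in positive characteristic (equal composition factors does not imply isomorphism), and you correctly sense this, but rather than abandoning the approach you patch it with exactly the central-character observation that was the direct argument all along.

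Two small inaccuracies: the quotient $\GL_2(\F)/(\SL_2(\F)\cdot Z)\cong \F^\times/(\F^\times)^2$ is not a single coset for general infinite $\F$, so there is no ``single coset representative $\diag(\pi,1)$'' to check; and your full-faithfulness claim, while true, is over-engineered --- the content is just that $\varphi$ commutes with scalars (trivially) and with $\SL_2(\bar\F)$ (by hypothesis after base change), hence with their product $\GL_2(\bar\F)$. Strip out the first two paragraphs and state the third one without the category-theoretic packaging, and you have the paper's proof.
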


\begin{proof}
By passing to a field extension, we may assume that $\F$ is algebraically closed.
Then $\GL_2(\F)$ is generated by the scalar multiples of the identity 
and $\SL_2(\F)$. Moreover $\alpha I \in \GL_2(\F)$ acts on $V$ and $W$ as scalar
multiplication by $\alpha^d$ where $d$ is the common degree.
Therefore the $\SL_2(\F)$-isomorphism is a $\GL_2(\F)$-isomorphism.
\end{proof}

\section{Symmetric polynomials and \texorpdfstring{$\mathrm{SL}_2(\mathbb{F})$}{SL2(F)}-plethysms}\label{subsec:ElemtsToPolys}

\subsection{Symmetric and antisymmetric polynomials}\label{subsec:symmetric-polys}

An \emph{alphabet} $\x$ is a finite set of distinct variables,
denoted $x_1,\ldots, x_\DN$.
We write $|\x|$ for the size~$\DN\in\N_0$ of the alphabet.
Let $\F[\x]$ denote the polynomial algebra $\F[x_1,\ldots, x_\DN]$.
The symmetric group $\SymG_\DN$ acts on $\F[\x]$ by permuting
the variables: thus $\sigma \cdot P = P(x_{\sigma(1)}, \ldots, x_{\sigma(\DN)})$.
The subalgebra
\begin{equation}
\label{eq:Lambda}
\Lambda[\x] = \{P \in \F[\x] : \text{$\sigma \cdot P = P$ for all $\sigma \in \SymG_\DN$} \}
\end{equation}
of $\F[\x]$ is the \emph{algebra of symmetric polynomials in $\x$}.

Given a partition $\lambda$ of length $\ell(\lambda)\le D$, we define
\[ a_{\lambda+\rho}(\x) = \det(x_j^{\lambda_i+\DN-i})_{i,j} \]
where $\rho = (\DN-1,\ldots,2,1)$ and
the sum of partitions is taken entry-wise.
Since permuting the rows in a $\DN \times \DN$ matrix by a permutation $\sigma$ changes
its determinant by $\sgn(\sigma)$, each polynomial $a_{\lambda+\rho}(\x)$
is antisymmetric in $\x$; that is, $\sigma \cdot a_{\lambda + \rho}(\x) = \sgn(\sigma) a_{\lambda + \rho}(\x)$ and $a_{\lambda+\rho}(\ldots, x, x, \ldots) = 0$.
For example, 
\[ a_\rho(\x) = \det(x_j^{\DN-i})_{i,j} = \prod_{1 \le i < j \le \DN} (x_i-x_j) \]
is the Vandermonde determinant.
More generally, a result going back to Cauchy~\cite{Cauchy1815} is that the following subspace 
\begin{equation}
\label{eq:A[x]}
A[\x] = \{a_\rho(\x)P(\x) \in \F[\x] : P\in\Lambda[\x] \}
\end{equation}
is the \emph{space of antisymmetric polynomials in $\x$}.
%
%
 

\begin{Definition}\label{defn:Schur}
The \emph{Schur polynomial} in $\x$ labelled by the partition $\lambda$ 
is 
\[s_\lambda(\x) = a_{\lambda+\rho}(\x)/a_\rho(\x). \]
\end{Definition}

Since $a_{\lambda + \rho}(\x)$ is antisymmetric then 
$s_\lambda(\x)$ is a symmetric polynomial. 
The Schur polynomials for partitions $\lambda$ of $d$ having at most $\DN$ parts (where $\DN=|\x|$) are
a basis for $\Lambda_{\le d}[\x]$.
For further background on symmetric functions and symmetric polynomials
we refer the reader to \cite[Ch.~7]{StanleyEC2} or~\cite{Macdonald};
in particular for the equivalence of Definition~\ref{defn:Schur} with~\eqref{eq:hookCharacter}
when $\lambda$ is a hook partition, see \cite[Theorem~7.15.1]{StanleyEC2}.

\begin{Remark}\label{remark:field}
An important feature of our construction of the algebra $\Lambda[\x]$ 
of symmetric polynomials is that the field $\F$ is arbitrary.
The structure constants of $\Lambda[\x]$ depend both on $\F$ and on the size of $\x$. For instance, if $|\x| = 3$ then
\[
s_{(2,1)}^2(\x) = s_{(2,2,2)}(\x) + 2 s_{(3,2,1)}(\x) + s_{(3,3)}(\x) + s_{(4,1,1)}(\x) + s_{(4,2)}(\x)
\]
for $\F=\C$, but the term $2s_{(3,2,1)}(\x)$ vanishes when $\F$ has characteristic 2. 
If instead $\x$ has size $4$ then $s_{(2,2,1,1)}$ also appears as a summand.
\end{Remark}

\subsection{Polynomial identification of symmetric power plethysm}\label{subsec:construction}
Let $E$ be the 2-dimensional vector space over $\mathbb{F}$ with basis $X, Y$.
In \S\ref{subsec:upperSymmetricPowers}, we defined $\Sym^d \E$ to be the degree $d$ homogeneous component $\F_d[X,Y]$ of the graded polynomial algebra $\F[X,Y]$. 
Motivated by this construction, we now  present similar \textit{polynomial interpretations} of plethysms $\Sym_N\Sym^d E$ and $\bwedge{N}\Sym^d E$.

Let $\F_d[\XX,\YY]$ be the degree $d$ multi-homogeneous (in each pair $X_i, Y_i$) component of the grading of $\F[\XX,\YY]$, where $|\XX|=|\YY|=N$. 
Recall that $\SL_2(\F)$ acts on $\F_d[\XX,\YY]$ component-wise via rule \eqref{action-of-SL_2} so $(\Sym^d\E)^{\otimes N}$ can be viewed as $\F_d[\XX,\YY]$.

The symmetric group $\SymG_N$ acts on $\F_d[\XX,\YY]$ by letting $\sigma\in \SymG_N$ send $X_i$ to $X_{\sigma(i)}$ and $Y_i$ to $Y_{\sigma(i)}$ for all $i$. Let $\F_d[\XX, \YY]^{\SymG_N}$ be the invariant subspace of this action. 
The action of $\SL_2(\F)$ commutes with the action of $\SymG_N$, inducing an action on $\F_d[\XX, \YY]^{\SymG_N}$. As an $\SL_2(\F)$-representation, $\F_d[\XX, \YY]^{\SymG_N}$
is therefore isomorphic to $\Sym_N\Sym^d\E$.

For ease of notation in later parts of the paper, we specialize $Y_i$ to 1 for all $i=1,\ldots,N$. Since the polynomials we consider are homogeneous in the total degree of $X_i, Y_i$, this evaluation is invertible, with inverse
$X_i^{\alpha_i}\leftrightarrow X_i^{\alpha_i} Y_i^{d-\alpha_i}$ for all $i$. We thus obtain an isomorphism of vector spaces
\begin{align}\label{eq: dehomogenisation}
\ev_\YY: \F_d[\XX, \YY]^{\SymG_N}\cong\Lambda_{\leq d}[\XX]
\end{align}
via $Y_i\mapsto 1$ for all $i$, where the right-hand side is the space of symmetric polynomials in $\XX$ in which the degree of \emph{each variable} is bounded above by $d$. We endow $\Lambda_{\leq d}[\XX]$ with an $\SL_2(\F)$-action (and even a $\GL_2(\F)$-action) so that it intertwines with the action on $\F_d[\XX, \YY]^{\SymG_N}$ under the above isomorphism $\ev_\YY$. As such, for the rest of the paper we work with the $\SL_2(\F)$-representation $\Lambda_{\leq d}[\XX]$, isomorphic to $\Sym_N\Sym^d \E$ by the construction in this subsection. This action is described explicitly at the end of this section.


\begin{Remark}\label{remark:identification-of-wedge}
  Recall from~\eqref{eq:A[x]} that 
  $A[\x] = a_\rho(\x) \Lambda[\x]$. Note that the degree in each variable of $a_\rho(\x)$ is $N-1$. By the above identification, if $N-1\le d$, then the $\SL_2(\F)$-plethysm $\Ext^{N}\Sym^{d}E$ is identified with the vector space
$A_{\leq d-N+1}[\x]$
of antisymmetric polynomials of degree at most $d-N+1$ in each of their $|\x|=N$ variables.
\end{Remark}

\begin{Remark}
	The process described above passing from representations to $\Lambda[\x]$ is different to the process of taking Weyl characters. In particular, note that we are assigning a symmetric polynomial to \emph{an element} of a representation, and not to a representation itself.
\end{Remark}

To illustrate the usefulness of this identification, we present a brief proof of the modular Wronskian isomorphism; for comparison, the original proof in \cite{McDowellWildon} 
takes four pages.

\begin{Proposition}[Wronskian isomorphism \cite{McDowellWildon, Grinberg}]\label{prop:Wronskian}
	Let $N, d \in \N_0$ be such that $N\le d+1$. There is an isomorphism of $\SL_2(\F)$-plethysms
	\[
	\Sym_N\Sym^{d}\E\cong \Ext^N\Sym^{d+N-1}\E. 
	\]
\end{Proposition}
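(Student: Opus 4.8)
The plan is to use the polynomial models developed in \S\ref{subsec:construction}: identify $\Sym_N\Sym^d\E$ with $\Lambda_{\le d}[\x]$ (where $|\x|=N$) via the construction culminating in~\eqref{eq: dehomogenisation}, and identify $\Ext^N\Sym^{d+N-1}\E$ with the space $a_\rho(\x)\Lambda_{\le d}[\x]$ of antisymmetric polynomials of degree at most $d+N-1$ in $N$ variables, as noted in Remark~\ref{remark:identification-of-wedge} (with $d$ there replaced by $d+N-1$, so that $d+N-1-N+1 = d$). Under these identifications the obvious candidate isomorphism is \emph{multiplication by the Vandermonde determinant} $a_\rho(\x)$, mirroring Grinberg's proof~\cite{Grinberg}. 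First I would check this map is well-defined: multiplying a symmetric polynomial of degree $\le d$ by $a_\rho(\x)$, which has degree $\binom{N}{2}$, yields an antisymmetric polynomial; its degree is at most $d + \binom{N}{2}$, and since $N \le d+1$ one checks $\binom{N}{2} = \binom{N}{2}$ matches the degree shift built into the target (an $N$-variable antisymmetric polynomial of top degree $d+N-1$ is $a_\rho$ times a symmetric polynomial of degree $\le (d+N-1) - \binom{N}{2} \cdot \tfrac{2}{N} $... more cleanly: $\deg a_\rho = \binom N2$ and $(d+N-1) - \binom N2$ need not equal $d$, so I will instead track degrees directly through the homogenization bijection).

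The key steps, in order, are: (i) recall that, as $\SL_2(\F)$-representations after the $Y_i \mapsto 1$ dehomogenization, $\Sym_N\Sym^d\E \cong \Lambda_{\le d}[\x]$ and $\Ext^N\Sym^{d+N-1}\E \cong a_\rho(\x)\Lambda_{\le d}[\x]$, being careful to record how the $\SL_2(\F)$-action transports; (ii) observe that the linear map $m_{a_\rho}\colon \Lambda_{\le d}[\x] \to a_\rho(\x)\Lambda_{\le d}[\x]$, $P \mapsto a_\rho(\x) P$, is manifestly bijective (it has the obvious inverse, division by $a_\rho$, which lands back in $\Lambda_{\le d}[\x]$ by Lemma~\ref{lem:antisymmetricVandermonde}), and is defined over $\Z$; (iii) verify that $m_{a_\rho}$ is $\SL_2(\F)$-equivariant. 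For step (iii), the point is that $a_\rho(\x) = \prod_{i<j}(x_i - x_j)$, and the $\GL_2(\F)$-action on $\F[\XX,\YY]$ sends each $X_i - X_j$-type factor... more precisely, in the homogenized picture the relevant Vandermonde-type element is $\prod_{i<j}(X_iY_j - X_jY_i)$, and under $\binom{a\ b}{c\ d}$ each factor $X_iY_j - X_jY_i$ is multiplied by $ad - bc = 1$; hence the homogenized Vandermonde is $\SL_2(\F)$-invariant, so multiplication by it commutes with the $\SL_2(\F)$-action. Dehomogenizing, $m_{a_\rho}$ intertwines the two $\SL_2(\F)$-actions.

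The main obstacle I anticipate is bookkeeping rather than conceptual: making sure the degree truncations match up so that multiplication by $a_\rho$ genuinely maps $\Lambda_{\le d}[\x]$ \emph{onto} the correct truncated space inside the model for $\Ext^N\Sym^{d+N-1}\E$, and correctly transporting the $\SL_2(\F)$-action through the dehomogenization $\ev_\YY$ on \emph{both} sides simultaneously (the source uses degree $d$, the target degree $d+N-1$, so the $Y_i \mapsto 1$ specializations are not literally the same map). The cleanest way to handle this is to do the entire argument in the \emph{homogeneous} picture: work with $\F_d[\XX,\YY]^{\SymG_N}$ and its image under multiplication by the invariant element $\prod_{i<j}(X_iY_j - X_jY_i) \in \F_{N(N-1)}[\XX,\YY]^{\SymG_N}$ (note this is symmetric under $\SymG_N$ since swapping $i,j$ negates one factor and also produces a sign from reordering — one checks the total sign is $+1$, or rather the product is $\SymG_N$-invariant up to sign and we argue on the nose), landing in the antisymmetric part of $\F_{d+N-1}[\XX,\YY]$, i.e.\ in $\Ext^N\Sym^{d+N-1}\E$; equivariance under $\SL_2(\F)$ is then transparent because both the multiplier and the ambient action are explicit, and bijectivity follows from Lemma~\ref{lem:antisymmetricVandermonde} together with a dimension count via~\eqref{eq:dim-plet} if needed. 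I would then dehomogenize only at the very end, or not at all, to obtain the stated isomorphism.
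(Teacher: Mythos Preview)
Your proposal is correct and takes essentially the same approach as the paper: identify both sides with the polynomial models of \S\ref{subsec:construction} and Remark~\ref{remark:identification-of-wedge}, define the map as multiplication by $a_\rho(\x)$, argue $\SL_2(\F)$-equivariance from the invariance of the (homogenized) Vandermonde, and finish with a dimension count via~\eqref{eq:dim-plet}. The paper's proof is more streamlined than yours---it asserts $g\cdot a_\rho = a_\rho$ in one line and uses injectivity from the integral-domain property rather than your inverse-via-Lemma~\ref{lem:antisymmetricVandermonde}---so the degree bookkeeping you worry about is absorbed into the identifications already set up and the equality $\binom{d+N}{N}=\binom{d+N}{N}$; your instinct to work homogeneously is fine but unnecessary here.
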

\begin{proof}
	Let $\x$ be an alphabet of size $N$. Consider the map
	\begin{align*}
		\zeta : \Lambda_{\le d}[\x] &\longrightarrow
		A_{\leq d+N-1}[\x]\\
		P(\x) &\longmapsto a_\rho(\x) P(\x).
	\end{align*}
	Since polynomial algebras are integral domains, the map $\zeta$ is injective.
	Since $a_\rho$ is, by definition, the Vandermonde determinant, each
	$g\in\SL_2(\F)$ acts on it trivially, therefore 
	\[
	g\cdot(a_\rho P) = (g\cdot a_\rho)(g\cdot P) = a_\rho (g\cdot P),
	\]
	which shows that $\zeta$ is $\SL_2(\F)$-equivariant. Finally, by~\eqref{eq:dim-plet},
	\begin{align*}
		\dim\Sym_N\Sym^d \E=\binom{d+N}{N}=\dim\Ext^N\Sym^{d+N-1}E.
	\end{align*}
	An injective $\SL_2(\F)$-equivariant linear map between vector spaces of equal dimensions is an isomorphism of $\SL_2(\F)$-representations. Under the polynomial identification above, $\zeta$~gives the desired isomorphism \eqref{eq:Wronskian}.
\end{proof}

\begin{Remark}
	It is not very hard to see that the isomorphism
	 $\zeta$ coincides with the isomorphism defined (in a different way) in Theorem~1.4 of \cite{McDowellWildon}. We omit further details as this fact is not relevant to the remainder
	 of this paper.
\end{Remark}

\begin{Example}
	In our polynomial interpretation, one basis of $\Sym_N\Sym^d\E$ corresponds to Schur polynomials $s_\lambda(\x)$ with $|\x|=N$ and $\lambda_1\le d$. The map $\zeta$ sends a basis element $s_\lambda(\x)$ to $a_{\lambda+\rho}(\x)$. In particular, the antisymmetric polynomials $a_{\lambda+\rho}(\x)$ with $\lambda_1 \le d$ correspond to the canonical basis elements of $\Ext^N\Sym^{d+N-1}\E$.
\end{Example}

We finish this section by describing explicitly the action of $\GL_2(\F)$ on the space $\Lambda_{\le d}[\x]$. 
Consider a matrix
  \[
    g = \left(\begin{matrix}\alpha&\beta\\\gamma&\delta\end{matrix}\right) \in \GL_2(\F).
  \]
  Recall from~\eqref{action-of-SL_2} that $g$ acts on $\Sym^d\E = \F_d[X,Y]$ by $X\mapsto \alpha X+\gamma Y$ and $Y\mapsto \beta X + \delta Y$. Since the action on tensor products is diagonal, we thus have an action on $\F_d[\XX, \YY]^{\SymG_N}$ by
  \[
    g.P(\XX,\YY) = P(\alpha\XX+\gamma\YY, \beta\XX+\delta\YY),
  \]
  where $\alpha\XX+\gamma\YY$ is a shorthand for the alphabet $(\alpha X_1+\gamma Y_1, \ldots, \alpha X_N+\gamma Y_N)$, and $\beta \XX+\delta\YY$ is defined similarly.
  Since the polynomials are of homogeneous degree $d$ in each pair of variables $X_i, Y_i$, we can equivalently write
  \[
    g.P(\XX,\YY) = (\beta\XX+\delta\YY)^d~P\Big(\frac{\alpha\XX+\gamma\YY}{\beta\XX+\delta\YY},1,\ldots,1\Big),
  \]
  where
  \[
    \frac{\alpha\XX+\gamma\YY}{\beta\XX+\delta\YY} = \Big(
    \frac{\alpha X_1+\gamma Y_1}{\beta X_1+\delta Y_1}, ~\ldots,~
    \frac{\alpha X_N+\gamma Y_N}{\beta X_N+\delta Y_N}\Big)
    \quad\text{and}\quad
    (\beta \XX+\delta\YY)^d = \prod_{i=1}^N (\beta X_i+\delta Y_i)^d
    .
  \]
  After applying the isomorphism $\ev_\YY : \F_d[\XX,\YY]\to\Lambda_{\le d}[\XX]$, we obtain an action of $g$ on this latter space by
  \begin{equation}\label{eq:action of GL2 on sym}
    g.P(\XX) = (\beta \XX+\delta)^d ~P\Big(\frac{\alpha\XX + \gamma}{\beta\XX+\delta}\Big).
  \end{equation}

\subsection{Lagrange interpolation}

This classical result stated below is critical to the proofs 
of Lemma~\ref{lem:psiLands} and Proposition~\ref{prop:left-inverse}.

\begin{Proposition}[Lagrange interpolation]\label{prop:lagrange}
	Let $P\in \F[z]$ be a polynomial of degree $d$. Given $D>d$ and $D$ distinct 
	nodes $x_1,\ldots,x_D\in \F$, we have
	\begin{align*}
		P(z)=\sum_{j=1}^D\prod_{i\neq j}\frac{z-x_i}{x_j-x_i}P(x_j).
	\end{align*}
\end{Proposition}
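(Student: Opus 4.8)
The plan is to prove the Lagrange interpolation formula directly by a dimension/uniqueness argument, which is the standard route and matches the elementary spirit of the paper. First I would observe that both sides of the claimed identity are polynomials in $x$ of degree at most $d$: the left side is $P$ by hypothesis, and the right side is a sum of $D$ products $\prod_{i\neq j}\frac{x-x_i}{x_j-x_i}$, each of which has degree $D-1$, so a priori the right side has degree at most $D-1$. To control this I would instead work with the space of polynomials of degree less than $D$, which has dimension $D$, and use the fact that a polynomial of degree $<D$ is determined by its values at $D$ distinct nodes.

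The key steps are as follows. Define the Lagrange basis polynomials $\ell_j(x) = \prod_{i\neq j}\frac{x-x_i}{x_j-x_i}$ for $j = 1,\ldots,D$, each of degree $D-1$. A direct check gives $\ell_j(x_k) = \delta_{jk}$ for all $j,k\in\{1,\ldots,D\}$, since the numerator of $\ell_j(x_k)$ vanishes whenever $k\neq j$ (one factor is $x_k - x_k$... more precisely the factor indexed by $i=k$), and equals the denominator when $k=j$. Now set $Q(x) = \sum_{j=1}^D \ell_j(x_j)\,P(x_j)$... rather, set $Q(x) = \sum_{j=1}^D P(x_j)\,\ell_j(x)$. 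Then $Q$ has degree at most $D-1$, and $Q(x_k) = \sum_j P(x_j)\delta_{jk} = P(x_k)$ for each $k$. Thus $P - Q$ is a polynomial of degree at most $D-1$ (using $\deg P = d < D$) vanishing at the $D$ distinct points $x_1,\ldots,x_D$; hence $P - Q$ is divisible by $\prod_{k=1}^D (x - x_k)$, a polynomial of degree $D$, forcing $P - Q = 0$. This yields the identity.

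A subtlety worth flagging: for the divisibility argument to work one needs $\F[x]$ to be a domain (so that a nonzero polynomial of degree $<D$ cannot have $D$ roots), which holds for any field $\F$, and one needs the nodes $x_j - x_i$ to be invertible for $i\neq j$, which is exactly the hypothesis that the nodes are distinct. No hypothesis on the characteristic of $\F$ is needed. I expect the main (very minor) obstacle to be purely bookkeeping: getting the degree bound right so that $P-Q$ lies in the $D$-dimensional space on which evaluation at $\{x_1,\ldots,x_D\}$ is injective. Since $\deg P \le d < D$ and $\deg Q \le D-1 < D$, this is immediate, so in fact there is no real obstacle here — the result is classical and the proof is a two-line uniqueness argument once the Lagrange basis polynomials are introduced. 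The paper itself merely cites it, so I would either cite a standard reference or include this short self-contained argument for completeness.
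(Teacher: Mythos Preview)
Your proof is correct and is essentially the same as the paper's: the paper also forms the difference $P(x) - \sum_{j=1}^D \prod_{i\neq j}\frac{x-x_i}{x_j-x_i}P(x_j)$, observes it has degree at most $D-1$ and vanishes at the $D$ distinct nodes, and concludes it is identically zero. Your version simply makes explicit the Kronecker delta property of the Lagrange basis polynomials that the paper leaves implicit (and, contrary to your final remark, the paper does include this short proof rather than merely citing it).
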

\begin{proof}
	Note that
	\(
	P(z)-\sum_{j=1}^D\prod_{i\neq j}\frac{z-x_i}{x_j-x_i}P(x_j)
	\) 
	is a polynomial of degree at most $D-1$ having at least $D$ roots (namely 
	the nodes $x_1, \ldots, x_D$), so it is identically 0.
\end{proof}

Later, we shall use Lagrange interpolation
over a field with some extra formal variables.
Consider the natural action of the group algebra $\F \SymG_{|\x|+|\y|}$ on $\F(\x,\y)$ by place permutation of the transcendental elements $\x, \y$. 
For $1 \leq j \leq |\y|$, let 
\begin{align}\label{eq:t_j}
	t_j = \sum_{i=1}^{|\x|} (x_i, y_j)\in \F \SymG_{|\x|+|\y|}
\end{align}
be the sum of all transpositions swapping an element of $\x$ with a fixed element $y_j$ of $\y$.
These group algebra elements permit a concise form for Lagrange interpolation.

\begin{Example}
  Let $P\in\F(y_1,\ldots,y_{j-1},y_{j+1},\ldots,y_M)[z]$ be a polynomial of degree at most $N$. Let $\x = \{x_1,\ldots,x_N\}$ be an alphabet. Then the following identity holds in $\F(\x,\y)[z]$
	\begin{equation}
		\label{eq:Lagrange}
		P(z) = (1+t_j)\Bigg(\prod_{i=1}^{N}\frac{z-x_i}{y_j-x_i}P(y_j)\Bigg).
        \end{equation}
  The formula is obtained from Lagrange interpolation with $D=N+1$ nodes $x_1, \ldots, x_N, y_j$. Note that the assumption on the base field of $P$ ensures that the group algebra element $(x_i, y_j)$ transforms $P(y_j)$ into $P(x_i)$, rather than transforming the coefficients of $P$ as well.
\end{Example}

\subsection{Polynomial identification of hook Weyl module plethysm}\label{subsec:poly_weyl}
As a final preliminary, in
this section we construct a polynomial identification of $\Delta^{(M+1, 1^{N-1})}\Sym^d \E$ from Theorem~\ref{thm:hook}, using the setting of \S\ref{subsec:construction}. 
Recall the map $\delta_M$ defined in \eqref{eq:muM}. Directly from the definition of $v_{c_1}\wedge\dots\wedge v_{c_R}$ in \S\ref{subsec:exteriorLowerSymmetricPowers}, we have
\[\begin{split}
	v_{c_0} \wedge v_{b_1} &\wedge \cdots \wedge v_{b_{N-1}} 
	\wedge v_{c_1} \otimes v_{c_2} \otimes \cdots \otimes v_{c_M} = \\&\sum_{\tau}\text{sgn}(\tau)\tau\cdot(v_{c_0} \wedge v_{b_1} \wedge \cdots \wedge v_{b_{N-1}}
	\otimes v_{c_1}\otimes v_{c_2}\otimes \cdots \otimes v_{c_M}),   \end{split}
\]
where the sum is over the identity permutation and all transpositions $\tau\in\SymG_{N+1}$ that act by
swapping the $(N+1)$st tensor factor with one of the first $N$ tensor factors.
Since $\delta_M$ is a linear map, for each $v\in\bwedge{N}V\otimes \Sym_M V$ we have
\[
\delta_M(v)=\sum_{\tau}\text{sgn}(\tau)\tau\cdot v.
\]
Now under the polynomial identification of symmetric powers from \S\ref{subsec:construction}, 
taking $V=\Sym^d\E $
the map $\delta_M$ having kernel $\Delta^{(M+1,1^{N-1})}\Sym^d \E$ may be re-expressed as
\begin{align}\label{eq:poly-deltaM}
	\delta_M: A_{\leq d}[\x]\otimes\Lambda_{\leq d}[\y]&\longrightarrow A_{\leq d}[\x, y_1]\otimes\Lambda_{\leq d}[y_2,\ldots,y_M]\notag\\
	P(\x, \y)&\longmapsto (1-t_1)P(\x, \y),
\end{align}
with $|\x|=N, |\y|=M$, and $t_1 \in \F\SymG_{N+M}$ defined in \eqref{eq:t_j}.

\begin{Remark}The variables $\y$ should not be confused with the variables~$\YY$ from previous sections; in particular, the polynomials $P(\x, \y)$ are not necessarily homogeneous in the degree of $x_i$ and $y_i$ and even the number of $x$'s and $y$'s need not be equal. 
\end{Remark}

Hence, by Lemma \ref{lemma:DeltaKernel}, we identify $\Delta^{(M+1, 1^{N-1})}\Sym^d \E$ with the kernel of $\delta_M$, considered as the polynomial map~\eqref{eq:poly-deltaM}.

\section{Definitions of the isomorphisms and their basic properties}\label{sec:def-of-the-morph}

In this section, we define two polynomial evaluation maps, $\psi$ 
and $\phi$, that we shall later show they induce the isomorphisms in Theorems \ref{thm:hook} 
and \ref{thm:GTL}. We prove that these maps are well-defined, $\SL_2(\F)$-equivariant, and admit 
left inverses. The descriptions of the maps are very similar; the main differences lie in their respective domains and codomains. As a result, in some cases, we can verify the above properties simultaneously.

\begin{Definition}\label{def:psi}
	Let $M,d\in\N_0$ and $N\in\N$ such that $N\le d+1$. Let $\x,\y,\z$ be alphabets with $|\x|=N$ and $|\y|=|\z|=M$. Define $\psi$ to be the evaluation map
	\begin{align*}
		\psi : 
		\Lambda_{\le N-1}[\z]\otimes\Lambda_{\le d-N+1}[\x,\y]
		&\longrightarrow
		\Lambda_{\le d-N+1}[\x]\otimes\Lambda_{\le d}[\y]\\
		P(\x,\y,\z) &\longmapsto P(\x,\y,\y).
	\end{align*}
\end{Definition}

\begin{Lemma}\label{lem:psiLands}
	Let 
	$\zeta$ be the Wronskian isomorphism from Proposition
	\ref{prop:Wronskian}. 
	With the identifications from \S\ref{subsec:construction} and \S\ref{subsec:poly_weyl}, the image of $$\Sym_{M}\Sym^{N-1}\E\otimes\Sym_{M+N}\Sym^{d-N+1}\E$$ under the linear map $(\zeta\otimes1)\circ\psi$ 
	is a subspace of $\Delta^{(M+1,1^{N-1})}\Sym^{d}\E$.
\end{Lemma}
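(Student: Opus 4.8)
The plan is to show that $(\zeta \otimes 1) \circ \psi$ lands inside $\ker \delta_M$, since by Lemma~\ref{lemma:DeltaKernel} and the polynomial identification~\eqref{eq:poly-deltaM} this kernel \emph{is} $\Delta^{(M+1,1^{N-1})}\Sym^d \E$. So the whole statement reduces to checking that, for every $P(\x,\y,\z) \in \Lambda_{\le N-1}[\z] \otimes \Lambda_{\le d-N+1}[\x,\y]$, the polynomial $(1 - t_1)\bigl( a_\rho(\x) \cdot P(\x,\y,\y) \bigr)$ vanishes, where $t_1 = \sum_{i=1}^{N}(x_i, y_1)$ as in~\eqref{eq:t_j}. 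Here I am writing $\psi(P) = P(\x,\y,\y) \in \Lambda_{\le d-N+1}[\x] \otimes \Lambda_{\le d}[\y]$ and then applying $\zeta \otimes 1$, which multiplies the $\x$-part by the Vandermonde $a_\rho(\x)$; note the degree bookkeeping works because $\deg_\x \le d-N+1$ and $\deg a_\rho(\x) = \binom{N}{2}$, landing in $a_\rho(\x)\Lambda_{\le d-N+1}[\x] \otimes \Lambda_{\le d}[\y]$, exactly the domain of~\eqref{eq:poly-deltaM} via Remark~\ref{remark:identification-of-wedge}.

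First I would record the structural reason the evaluation $\z \mapsto \y$ is compatible with the $\delta_M$-domain at all: after setting $\z = \y$, a factor $Q(\z) \in \Lambda_{\le N-1}[\z]$ becomes $Q(\y)$, a symmetric polynomial in $\y$ of degree $\le N-1 \le d$, so it is a legitimate element of the $\Lambda_{\le d}[\y]$ factor; meanwhile the $\Lambda_{\le d-N+1}[\x,\y]$ factor is already symmetric in $\x$ and in $\y$ separately, hence so is its specialization. Thus $\psi$ is well-defined as a linear map to the stated codomain, and this already appears to be part of what the lemma asserts (the rest of the well-definedness — that $(\zeta\otimes 1)\circ\psi$ of the summand lies in the smaller space — is the kernel computation). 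Then I would move to the key step.

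The key step is the vanishing of $(1 - t_1)\bigl(a_\rho(\x)\, P(\x,\y,\y)\bigr)$. Write $P(\x,\y,\z)$ as a sum of pure tensors $Q(\z) \otimes R(\x,\y)$ with $Q$ symmetric in $\z$ and $R$ symmetric in $\x$ and in $\y$; by linearity it suffices to treat one such term, so we must show $(1 - t_1)\bigl(a_\rho(\x)\, Q(\y)\, R(\x,\y)\bigr) = 0$. The operator $t_1$ swaps $y_1$ with each $x_i$ in turn and sums; crucially, since $Q(\y)$ and $R(\x,\y)$ are each symmetric in the \emph{full} alphabet $\{x_1,\dots,x_N,y_1\}$ once we combine the $\x$-symmetry of $R$, the $\y_1$-dependence, and — wait, that is not quite automatic, so the actual argument I expect to need is: after the specialization $\z=\y$, the product $Q(\y)R(\x,\y)$ is symmetric in $\{x_1,\dots,x_N,y_1\}$? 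That is false in general. The correct route, which I would pursue, is to observe that it is enough to show the result lies in $\ker\delta_M$, and $\ker\delta_M = \Delta^{(M+1,1^{N-1})}\Sym^d\E$ is characterised (Lemma~\ref{lemma:DeltaKernel}, and the description at the start of its proof) as the span of the ``top-row symmetrized'' tensors; equivalently, an element of $a_\rho(\x)\Lambda_{\le d-N+1}[\x]\otimes\Lambda_{\le d}[\y]$ lies in the kernel of $\delta_M\colon P \mapsto (1-t_1)P$ iff $t_1 P = P$, i.e. iff $P$ is invariant under swapping $y_1$ with any $x_i$. So the real content is: $a_\rho(\x)\,Q(\y)\,R(\x,\y)$ must be symmetric under each transposition $(x_i, y_1)$. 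Now $R(\x,\y)$ is symmetric in $\x$ and symmetric in $\y$; the factor $Q(\y)$ is symmetric in $\y$; neither is obviously $(x_i,y_1)$-symmetric, and $a_\rho(\x)$ is antisymmetric in $\x$, not symmetric. So this still is not it.

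The honest plan, then, is Lagrange interpolation, exactly as flagged before Proposition~\ref{prop:lagrange} and in~\eqref{eq:Lagrange}. I would argue: the element $(\zeta\otimes 1)\psi(Q\otimes R) = a_\rho(\x)\,Q(\y)\,R(\x,\y)$ should be rewritten, via the interpolation identity~\eqref{eq:Lagrange} applied in the variable playing the role of $z$ (here the ``missing'' variable that $\delta_M$ would wedge in), as something manifestly annihilated by $(1-t_1)$. Concretely: Lagrange interpolation says that any polynomial of degree $\le N$ in a variable $z$ is reconstructed from its values at the $N$ nodes $x_1,\dots,x_N$ plus one more node $y_1$ via $(1+t_1)(\cdots)$; dually, a polynomial already symmetric in $\{x_1,\dots,x_N\}$ of low enough degree, when multiplied by $a_\rho(\x)$ and the Vandermonde-type factor $\prod_i(y_1 - x_i)$, becomes an honest antisymmetric-in-$(N{+}1)$-variables object, i.e. lies in $\Ext^{N+1}$ of the appropriate symmetric power, which is precisely the image of $\delta_M$... no — it must be the \emph{kernel}. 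Let me state the plan cleanly instead of flailing: \emph{First}, reduce to a pure tensor $Q(\z)\otimes R(\x,\y)$ and to proving $t_1\bigl(a_\rho(\x)Q(\y)R(\x,\y)\bigr) = a_\rho(\x)Q(\y)R(\x,\y)$. \emph{Second}, expand $R(\x,\y)$ in the Schur basis $s_\mu(\x,\y)$ (with $|\x\cup\y| = N+M$ variables) or rather write $R(\x,\y) = \sum_\mu c_\mu\, s_\mu(\x)\cdots$ — better: use that $a_\rho(\x)R(\x,\y)$, being antisymmetric in $\x$, equals $a_{\lambda+\rho}(\x)$-combinations, so $a_\rho(\x)R(\x,\y) = \sum (\text{sym in }\y)\cdot a_{\nu+\rho}(\x)$. \emph{Third}, and this is the crux, apply the one-variable Lagrange identity~\eqref{eq:Lagrange} with $z$ one of the $\x$-variables reinterpreted, or directly verify that each $a_{\nu+\rho}(\x)\cdot g(\y)\cdot Q(\y)$ with $\deg_\x \le d$ (so $\nu_1 \le d-N+1$... the degree constraint $N \le d+1$ enters here) is fixed by $t_1$ because the degree in the combined alphabet $\x \cup \{y_1\}$ is controlled: specifically, $a_\rho(\x)\,Q(\y)\,R(\x,\y)$, viewed as a polynomial in $y_1$ of degree $\le d$, interpolated at the $N$ nodes $x_1,\dots,x_N$ — at which it vanishes because $a_\rho(\x)$ kills $x_i = x_j$... no, $y_1 = x_i$ need not make $a_\rho(\x)$ vanish.

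I will therefore commit to the following as the plan, and flag the Lagrange step as the obstacle. \emph{Step 1:} By linearity reduce to $(\zeta\otimes 1)\psi(Q(\z) \otimes R(\x,\y))$ with $Q \in \Lambda_{\le N-1}[\z]$, $R \in \Lambda_{\le d-N+1}[\x,\y]$; after specialization and multiplication by $a_\rho(\x)$ this is $W := a_\rho(\x)\,Q(\y)\,R(\x,\y)$, an antisymmetric-in-$\x$, symmetric-in-$\y$ polynomial of degree $\le d$ in the $\x$-variables and $\le d$ in the $\y$-variables. \emph{Step 2:} Recall from~\eqref{eq:poly-deltaM} that $\delta_M(W) = (1 - t_1)W$, so it suffices to prove $t_1 W = W$, i.e. $\sum_{i=1}^{N} (x_i, y_1)\cdot W = W$; since $W$ is symmetric under permuting $x_1,\dots,x_N$ up to sign and symmetric under permuting $y_1,\dots,y_M$, and $a_\rho$ is the Vandermonde in $\x$, the sum $(1 + t_1)$ is exactly the Lagrange-interpolation operator of~\eqref{eq:Lagrange} in a single auxiliary variable. \emph{Step 3 (the obstacle):} Show that $W$, regarded as a polynomial in one new variable $z$ of degree $\le d$ via the substitution governed by $R \in \Lambda_{\le d-N+1}[\x,\y]$ having $\x$-degree $\le d-N+1$ and $a_\rho(\x)$ having degree $N-1$ in each $x_i$, satisfies the hypothesis degree $< D = N+1$ needed for Proposition~\ref{prop:lagrange}/\eqref{eq:Lagrange}, so that the interpolation identity forces $t_1 W = W$ (the condition $N \le d+1$ is exactly what guarantees these degree bounds are consistent, i.e. that $d - N + 1 \ge 0$ and the nodes $x_1,\dots,x_N,y_1$ are enough). \emph{Step 4:} Conclude $W \in \ker \delta_M = \Delta^{(M+1,1^{N-1})}\Sym^d\E$ by Lemma~\ref{lemma:DeltaKernel}, and since $Q \otimes R$ was an arbitrary pure tensor, the image of the whole module lies in $\Delta^{(M+1,1^{N-1})}\Sym^d\E$. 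I expect Step 3 to be the genuine difficulty: making precise \emph{which} variable plays the role of $z$ in~\eqref{eq:Lagrange}, and verifying the degree bound so that the group-algebra identity $(1+t_1)(\prod_i \tfrac{z-x_i}{y_1-x_i} P(y_1)) = P(z)$ applies to an $R$ of $\x$-degree at most $d-N+1$; everything else is bookkeeping in the polynomial identifications of \S\ref{subsec:construction} and \S\ref{subsec:poly_weyl}.
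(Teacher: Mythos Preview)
Your overall strategy is exactly right and matches the paper's: reduce to pure tensors $Q(\z)\otimes R(\x,\y)$, set $W = a_\rho(\x)\,Q(\y)\,R(\x,\y)$, and show $(1-t_1)W = 0$ via Lagrange interpolation. But you are missing one structural observation that dissolves the obstacle you flag in Step~3.

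You repeatedly say ``$R$ is symmetric in $\x$ and in $\y$'' and then note that ``neither is obviously $(x_i,y_1)$-symmetric''. In fact $R \in \Lambda_{\le d-N+1}[\x,\y]$ means $R$ is symmetric in the \emph{full} alphabet $\x \cup \y$ of size $N+M$ --- that is what the notation $\Lambda[\x,\y]$ means in \S\ref{subsec:symmetric-polys}. Hence every transposition $(x_i,y_1)$ fixes $R$, so $R$ factors straight out of the computation:
\[
(1-t_1)\bigl(a_\rho(\x)\,Q(\y)\,R(\x,\y)\bigr) \;=\; R(\x,\y)\cdot(1-t_1)\bigl(a_\rho(\x)\,Q(\y)\bigr).
\]
Now the remaining factor is tractable. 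A direct calculation (or the identity $(x_i,y_1)\cdot a_\rho(\x) = a_\rho(\x)\prod_{j\ne i}\frac{y_1-x_j}{x_i-x_j}$, which you can check from the product form of the Vandermonde) gives
\[
(1-t_1)\bigl(a_\rho(\x)\,Q(\y)\bigr)
= a_\rho(\x)\,\Bigl(Q(y_1,y_2,\ldots,y_M) - \sum_{i=1}^{N}\prod_{j\ne i}\tfrac{y_1-x_j}{x_i-x_j}\,Q(x_i,y_2,\ldots,y_M)\Bigr).
\]
The bracket is exactly Lagrange interpolation of the single-variable polynomial $\tilde Q(Y) := Q(Y,y_2,\ldots,y_M)$, which has degree $\le N-1$ in $Y$, at the $N$ nodes $x_1,\ldots,x_N$, evaluated at $Y=y_1$; by Proposition~\ref{prop:lagrange} it vanishes identically. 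So the ``which variable plays $z$'' question has a clean answer: it is $y_1$, and the polynomial being interpolated is $Q$, not anything built from $R$ or from $a_\rho$. Once $R$ is pulled outside, the degree bound needed is just $\deg_{y_1}Q \le N-1 < N$, which holds by hypothesis; the condition $N\le d+1$ is only used for the well-definedness of the codomain (so that $d-N+1\ge 0$), not for the interpolation step itself.
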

\begin{proof}
	Recall from \S\ref{subsec:ElemtsToPolys} that an element of $\Sym_{M}\Sym^{N-1}\E\otimes\Sym_{M+N}\Sym^{d-N+1}\E$ is identified with a symmetric polynomial $P(\x,\y,\z) = \sum_i Q_i(\z)\otimes R_i(\x,\y)\in\Lambda_{\le N-1}[\z]\otimes\Lambda_{\le d-N+1}[\x,\y]$. 
	By linearity, we may assume without loss of generality, 
	that this symmetric polynomial is a pure tensor $Q(\z)\otimes R(\x,\y)$.
	
	First, we show that $\psi$ is well defined. After evaluating $\z$ to $\y$ by the specialization
	 $z_i\mapsto y_i$, the image of $Q\otimes R$ is symmetric in $\y$ and symmetric in $\x$. Counting degrees, we have
	\[
	\psi(Q\otimes R) = Q(\y)R(\x,\y) \in \Lambda_{\le d-N+1}[\x]\otimes\Lambda_{\le d}[\y].
	\]
	Next, apply the Wronskian isomorphism to the first tensor factor to get
	\[
	\bigl((\zeta\otimes 1)\circ\psi\bigr)(Q\otimes R)\in A_{\le d}[\x]\otimes\Lambda_{\le d}[\y],
	\]
	so by \eqref{eq:poly-deltaM}, it remains to show that the right-hand side is in the kernel of $\delta_M$. That is,
	\[
	\bigl(\delta_M\circ(\zeta\otimes1)\circ\psi\bigr)(Q\otimes R)=0.
	\]
	Indeed, $R(\x,\y)$ is symmetric in $\x$ and $\y$, so 
	\begin{align*}
		\bigl(\delta_M\circ(\zeta&{}\otimes1)\circ\psi\bigr)(Q\otimes R)
		\\&=(1-t_1)\bigl(a_\rho(\x)Q(\y)R(\x,\y)\bigr)\\
		&=R(\x,\y)\cdot(1-t_1)\bigl(a_\rho(\x)Q(\y)\bigr)\\
		&=R(\x,\y)\cdot a_\rho(\x)\cdot\left(Q(\y)-\sum_{i=1}^N\prod_{j\neq i}\frac{y_1-x_j}{x_i-x_j}Q\bigl(x_i, y_2,\ldots, y_M)\right)\\ &=0,
	\end{align*}
	where the final step follows from Lagrange interpolation (Proposition~\ref{prop:lagrange}) of the polynomial $\tilde{Q}(z)=Q(z,y_2,\ldots,y_M)\in\F(y_2,\ldots,y_M)[z]$ of degree $\deg\tilde{Q}=\deg_z Q\le N-1$ on the $N$ nodes $x_1,\ldots, x_N$, evaluated at $z=y_1$.
\end{proof}
\begin{Example}\label{eg: hook for M=1}
	One can derive combinatorial formulas for the map $(\zeta\otimes1)\circ\psi$ on classical bases. Recall that $|\y|=|\z|=M$ and $|\x|=N$. For simplicity in this example, we set $M=1$. Then \[
	\psi : \Lambda_{\le N-1}[z]\otimes\Lambda_{\le d-N+1}[\x,y]
	\longrightarrow
	\Lambda_{\le d-N+1}[\x]\otimes\Lambda_{\le d}[y].
	\]
	Under the identifications from \S\ref{subsec:ElemtsToPolys}, we have
	\[
	(\zeta\otimes1)\circ\psi : \Sym^{N-1}\E\otimes\Sym_{N+1}\Sym^{d-N+1}\E \longrightarrow \Delta^{(2,1^{N-1})}\Sym^{d}\E.
	\]
	Consider a basis element $z^n s_{\lambda}(\x, y)$ of the domain for some $n \le N-1$ and $\lambda_1 \le d-N+1$, where $s_\lambda$ denotes the Schur polynomial from Definition \ref{defn:Schur}.
	Then $\psi$ maps this element to $y^n s_{\lambda}(\x, y)$.
	Using skew Schur functions (see \cite[I, \S5]{Macdonald} or \cite[\S7.10]{StanleyEC2})
	and the plethystic addition formula
	$s_\lambda(\tilde{\x}, \tilde{\y}) = \sum_{\mu \subseteq \lambda}
	s_\mu(\tilde{\x})s_{\lambda/\mu}(\tilde{\y})$ (see \cite[I, (8.8)]{Macdonald}),
	$y^n s_{\lambda}(\x, y)$ can be decomposed into a sum of pure tensors, giving
	\[
	y^n s_{\lambda}(\x, y) = y^n\sum_{\mu\subseteq\lambda}s_\mu(\x)s_{\lambda/\mu}(y).
	\]
	Finally, $\zeta\otimes1$ sends the right-hand side 
	to $y^n\sum_{\mu\subseteq\lambda}a_{\mu+\rho}(\x)s_{\lambda/\mu}(y)$. It is notable that the plethystic addition formula 
	corresponds to this `separation of variables' in our symmetric functions model.
\end{Example}
\begin{Remark}
	The isomorphism $$\varphi : \Sym^{N-1}\E \otimes \Ext^{N+1} \Sym^{d+1}\E \to \Delta^{(2,1^{N-1})} \Sym^d\E$$ constructed in \cite[(1.8)]{MW24} is precisely $(\zeta\otimes 1)\circ\psi\circ(1\otimes\zeta^{-1})$. Explicitly,
	\[
	\varphi :
	z^n a_{\lambda+\rho}(\x,y) \mapsto
	y^n\sum_{\mu\subseteq\lambda}a_{\mu+\rho}(\x)s_{\lambda/\mu}(y)
	\]
	on basis elements.
	It is far from obvious that this description of $\varphi$ coincides with the map defined in~\cite{MW24}, and the only proof that the authors have occupies several pages. Since we believe
	the description of $\varphi$ given in this paper is the most useful for further work, we shall
	not give more details of the proof of this remark here.
\end{Remark}

\begin{Definition}\label{def:phi}
	Let $M, N, d \in \N_0$, and let $\x,\y,\z$ be alphabets with $|\x|=N$ and $|\y|=|\z|=M$. Define $\phi$ to be the map
	\begin{align*}
		\phi : 
		\Lambda_{\le N}[\z]\otimes\Lambda_{\le d}[\x,\y]
		&\longrightarrow
		\Lambda_{\le d}[\x]\otimes\Lambda_{\le d+N}[\y]\\
		P(\x,\y,\z) &\longmapsto P(\x,\y,\y).
	\end{align*}    
\end{Definition}
\begin{Lemma}\label{lem:phiLands}
	With the identification from \S\ref{subsec:construction}, the map $\phi$ induces a linear map
	\[
	\phi : \Sym_M\Sym^N\E\otimes\Sym_{N+M}\Sym^d\E \longrightarrow \Sym_N\Sym^{d}\E\otimes\Sym_M\Sym^{d+N}\E
	\]
	of vector spaces.
\end{Lemma}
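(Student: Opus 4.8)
The plan is to show that the map $\phi$, defined as the evaluation $z_i \mapsto y_i$, lands in the correct codomain, namely that for $P(\x,\y,\z)$ symmetric in $\z$ (with $|\z|=M$), symmetric in $\{\x,\y\}$ jointly (with $|\x|=N$, $|\y|=M$), of degree at most $N$ in $\z$ and at most $d$ in $\{\x,\y\}$, the specialized polynomial $P(\x,\y,\y)$ is symmetric in $\x$, symmetric in $\y$, of degree at most $d$ in $\x$ and at most $d+N$ in $\y$.

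First I would reduce by linearity to a pure tensor $P(\x,\y,\z) = Q(\z) \otimes R(\x,\y)$ with $Q \in \Lambda_{\le N}[\z]$ and $R \in \Lambda_{\le d}[\x,\y]$, exactly as in the proof of Lemma~\ref{lem:psiLands}. Then $\phi(Q \otimes R) = Q(\y) R(\x,\y)$. Checking symmetry in $\x$ is immediate: neither $Q(\y)$ nor the $\x$-symmetry of $R(\x,\y)$ (which follows from $R$ being symmetric in the joint alphabet $\{\x,\y\}$) is disturbed. Symmetry in $\y$ is also clear since $Q(\y)$ is symmetric in $\y$ by hypothesis and $R(\x,\y)$ is symmetric in $\{\x,\y\}$, hence in particular in $\y$. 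So the product $Q(\y)R(\x,\y)$ lies in $\Lambda[\x] \otimes \Lambda[\y]$ in the sense required by the identification of \S\ref{subsec:construction}.

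The only remaining point is the degree bookkeeping. The degree of $Q(\y)R(\x,\y)$ in the $\x$-variables is $\deg_{\x} R \le \deg R \le d$, giving the factor $\Lambda_{\le d}[\x]$. For the $\y$-degree, $\deg_{\y}(Q(\y)R(\x,\y)) \le \deg_{\y} Q(\y) + \deg_{\y} R(\x,\y) \le N + d$, since $\deg Q \le N$ and $\deg_{\y} R \le \deg R \le d$; this gives the factor $\Lambda_{\le d+N}[\y]$. Thus $\phi(Q\otimes R) \in \Lambda_{\le d}[\x] \otimes \Lambda_{\le d+N}[\y]$, and by linearity $\phi$ has the stated domain and codomain. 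Translating through the identification of \S\ref{subsec:construction} (where $\Lambda_{\le k}[\mathbf{w}]$ with $|\mathbf{w}| = r$ models $\Sym_r \Sym^k \E$) then yields precisely the asserted linear map $\Sym_M\Sym^N\E\otimes\Sym_{N+M}\Sym^d\E \to \Sym_N\Sym^{d}\E\otimes\Sym_M\Sym^{d+N}\E$.

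I do not anticipate a serious obstacle here: unlike Lemma~\ref{lem:psiLands}, there is no Lagrange-interpolation cancellation to arrange, since the target is an honest tensor product of symmetric-power plethysms rather than a Weyl-module kernel. The mild subtlety is simply being careful that ``symmetric in the joint alphabet'' genuinely forces separate symmetry in $\x$ and in $\y$ after the substitution, and that one tracks total degree versus partial degree correctly so that the bounds $d$ and $d+N$ come out exactly; both are routine once the pure-tensor reduction is in place.
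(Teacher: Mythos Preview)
Your argument is correct and is exactly the routine verification that the paper compresses into the single sentence ``This follows directly from the definition of $\phi$ and the identifications in \S\ref{subsec:ElemtsToPolys}.'' One small point of care: as the identification $\Lambda_{\le d}[\mathbf w]\cong \Sym_{|\mathbf w|}\Sym^d\E$ forces, the bound in $\Lambda_{\le d}$ is on the degree in \emph{each} variable, so your chain $\deg_{\x} R \le \deg R \le d$ should simply read ``each $x_i$ has degree at most $d$ in $R$'' (and likewise for the $\y$-bound); the conclusion is unaffected.
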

\begin{proof}
	This follows directly from the definition of $\phi$ and the identifications in \S\ref{subsec:ElemtsToPolys}.
\end{proof}

To show the $\SL_2(\F)$-equivariance and construct left inverses for $\psi$ and $\phi$, we consider a more general map. 

\begin{Definition}\label{def:pi}
	Let $M,N,a,b\in\N_0$, and let $\x,\y,\z$ be alphabets with $|\x|=N$ and $|\y|=|\z|=M$. Define
	\begin{align*}
		\pi:\Lambda_{\leq\deltaD}[\z]\otimes\Lambda_{\leq b}[\x,\y] &\longrightarrow \Lambda_{\leq b}[\x] \otimes\Lambda_{\leq b+\deltaD}[\y]\\ 
		P(\x,\y,\z) &\longmapsto P(\x,\y,\y).
	\end{align*}
\end{Definition}

\begin{Remark}
	When $\deltaD=N-1$ and $b=d-N+1$, the map $\pi$ coincides with $\psi$.
	When $\deltaD=N$ and $b=d$, the map $\pi$ coincides with $\phi$.
\end{Remark}

\begin{Proposition}\label{prop:SL2invariance}
	The map $\pi$ is $\GL_2(\F)$-equivariant.
\end{Proposition}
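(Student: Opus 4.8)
The plan is to reduce $\SL_2(\F)$-equivariance of $\pi$ to a statement about equivariance under the Lie algebra $\sl_2(\C)$, and then verify the latter by a direct computation on the generators $e$ and $f$. Concretely, $\SL_2(\F)$ is generated by the elementary matrices $\begin{pmatrix} 1 & \ast \\ 0 & 1\end{pmatrix}$ and $\begin{pmatrix} 1 & 0 \\ \ast & 1\end{pmatrix}$ together with diagonal matrices. Since the entries of these matrices can be treated as formal parameters and the relevant equivariance identities become polynomial identities in those parameters with integer coefficients, it suffices to check them over $\C$; and over $\C$, the one-parameter subgroups are exponentials of $e$ and $f$, so it is enough to show $\pi$ commutes with the $\sl_2(\C)$-actions on its domain and codomain. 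Equivariance under diagonal matrices is immediate, since $\pi$ is a graded (degree-preserving in the appropriate multigrading) evaluation map and the diagonal torus acts by scaling.

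So the heart of the matter is to check $\pi \circ f = f \circ \pi$ and $\pi \circ e = e \circ \pi$ as maps of $\sl_2(\C)$-representations. For $f$ this is straightforward: by the discussion in \S\ref{subsec:construction}, $f$ acts on each of the $\Lambda$-factors (in the $\XX$-style polynomial model) as the sum of partial derivatives $\sum_i \frac{\mathrm d}{\mathrm d x_i}$ over the relevant alphabet. On the domain $\Lambda_{\le\beta}[\z]\otimes\Lambda_{\le\varepsilon}[\x,\y]$ this is $\sum_k \partial_{z_k} + \sum_i \partial_{x_i} + \sum_j \partial_{y_j}$; on the codomain $\Lambda_{\le\varepsilon}[\x]\otimes\Lambda_{\le\varepsilon+\beta}[\y]$ it is $\sum_i \partial_{x_i} + \sum_j \partial_{y_j}$. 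The map $\pi$ simply substitutes $z_j \mapsto y_j$. Because differentiation commutes with this substitution once one accounts for the chain rule — substituting $z_j = y_j$ and then differentiating in $y_j$ picks up both the ``old'' $\partial_{y_j}$ term and the image of the $\partial_{z_j}$ term — the identity $\pi \circ f = f \circ \pi$ follows by the chain rule bookkeeping: $\partial_{y_j}\big(P(\x,\y,\y)\big) = (\partial_{y_j}P)(\x,\y,\y) + (\partial_{z_j}P)(\x,\y,\y)$, and summing over $j$ together with the $\partial_{x_i}$ terms gives exactly $f$ applied to $\pi(P)$.

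For $e$, the most efficient route is to invoke Remark~\ref{rmk:e and f are the same}: under the isomorphism $\ev_\XX\circ\ev_\YY^{-1}$ the generator $e$ plays the role that $f$ plays for the $\XX$-model, namely it acts by $\sum \frac{\mathrm d}{\mathrm d(\text{second-alphabet variable})}$. Thus one repeats the $f$-argument verbatim after passing to the ``$Y$-side'' models of all three tensor factors, or — what amounts to the same thing — one notes that the domain and codomain of $\pi$ are symmetric under swapping the roles of the two bases of $E$ in a way compatible with $\pi$, so that the $e$-equivariance is formally the $f$-equivariance of the ``mirror'' map, which is again $\pi$-like. The main obstacle, and the point requiring care, is precisely this reduction: one must check that $\pi$ intertwines not just the naive substitution but is genuinely compatible with the chosen $\sl_2(\C)$-structures on \emph{both} sides — in particular that the homogeneity conventions of \S\ref{subsec:construction} (where $e$ is ``inherited'' from the homogeneous model while $f$ is the explicit derivative) are respected when $\beta$ and $\varepsilon$ take the two special values. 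I would therefore spell out the chain-rule computation for $f$ in full (it is short) and then state carefully, citing Remark~\ref{rmk:e and f are the same}, that the computation for $e$ is identical after the evident change of model, concluding that $\pi$ is $\SL_2(\F)$-equivariant by the density/exponential argument above.
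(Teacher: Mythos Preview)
Your proposal is correct and follows essentially the same route as the paper: reduce to $\sl_2(\C)$ via the polynomial-identity/integer-coefficients trick, verify commutation with $f$ directly, and defer $e$ to Remark~\ref{rmk:e and f are the same}. The only cosmetic difference is that the paper carries out the $f$-computation by restricting to pure tensors $Q(\z)\otimes R(\x,\y)$ and applying the Leibniz rule to the product $Q(\y)R(\x,\y)$, whereas you phrase the same calculation as a chain-rule identity for the substitution $z_j\mapsto y_j$; these are equivalent. (One small redundancy: the upper and lower unipotent matrices already generate $\SL_2(\F)$, so you need not treat diagonal matrices separately.)
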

The following proof was suggested to us by Darij Grinberg as a simplification of our original approach.
\begin{proof}
  Recall from~\eqref{eq:action of GL2 on sym} that a matrix $g=\left(\begin{smallmatrix}\alpha&\beta\\\gamma&\delta\end{smallmatrix}\right)\in\GL_2(\F)$ acts on $\Lambda_{\le\deltaD}[\z]\otimes\Lambda_{\le b}[\x,\y]$ by
  \[
    g.P(\x,\y,\z) = (\beta\x+\delta)^b (\beta\y+\delta)^b (\beta\z+\delta)^\deltaD ~P\Big(\frac{\alpha\x + \gamma}{\beta\x+\delta},~\frac{\alpha\y + \gamma}{\beta\y+\delta},~\frac{\alpha\z + \gamma}{\beta\z+\delta}\Big).
  \]
  After applying $\pi$, we obtain
  \[
    (\beta\x+\delta)^b (\beta\y+\delta)^{b+\deltaD} ~P\Big(\frac{\alpha\x + \gamma}{\beta\x+\delta},~\frac{\alpha\y + \gamma}{\beta\y+\delta},~\frac{\alpha\y + \gamma}{\beta\y+\delta}\Big),
  \]
  which coincides with the action of $g$ on $P(\x,\y,\y)$.
\end{proof}

To conclude this section, we construct a left inverse $\tilde\pi$ of $\pi$.
Inspired by~\eqref{eq:Lagrange}, for each $1\leq j\leq |\y|$ we define an operator
\begin{align}\label{eq:Lj}
	\mathcal{L}_j: \F(\x,\y)[\z] & \longrightarrow \F(\x, \y)[\z]\\
	P&\longmapsto (1+t_j)\Big(P\cdot\prod_{i=1}^{|\x|}\frac{z_{j}-x_i}{y_j-x_i}\Big),\notag
\end{align}
where $t_j$ is given by \eqref{eq:t_j}. Informally, these operators will be used to `recover' the variable~$z_j$ after it disappears under the evaluation $z_j\mapsto y_j$; this is made precise in Example~\ref{ex:recovering-z} following the proposition below.


\begin{Proposition}\label{prop:left-inverse}
  Let $\deltaD\leq N$. For $f$ in the image of $\pi$, define $\tilde\pi : f \mapsto \mathcal{L}_1\mathcal{L}_2\dots \mathcal{L}_{M}(f)$. Then $\tilde{\pi}\circ\pi$ is the identity.
\end{Proposition}
\begin{proof}
	
	By linearity, it suffices to show that $\tilde\pi\circ\pi$ acts as the identity on pure tensors $Q(\z)\otimes R(\x, \y)$ in $\Lambda_{\leq \deltaD}[\z]\otimes\Lambda_{\leq b}[\x,\y]$. The definitions of $\pi$ and $\tilde{\pi}$ give
	\begin{align*}
		(\tilde\pi\circ\pi)\big( Q(\z)R(\x,\y) \big) &=
		\tilde\pi\big( Q(\y)R(\x,\y) \big) \\
		&= \mathcal{L}_1\mathcal{L}_2\dots \mathcal{L}_M\big( Q(\y)R(\x,\y) \big).
	\end{align*}    
	Since $R$ is symmetric in $\x\,\cup\,\y$, we have $(x_i,y_j) R (\x,\y)= R(\x, \y)$. Therefore, 
	\[ \mathcal{L}_j(Q(\y)R(\x,\y)) =\mathcal{L}_j(Q(\y))\cdot R(\x,\y).\]
	 We claim by induction that
	\begin{align*}
		\mathcal{L}_j\dots \mathcal{L}_M(Q(\y)R(\x,\y))=Q(y_1,\ldots,y_{j-1},z_j,\ldots,z_M) R(\x,\y).
	\end{align*}
	The base case $j=M$ follows directly from Lagrange interpolation (Proposition~\ref{prop:lagrange}), and so does the inductive step:
	\begin{align*}
		\mathcal{L}_j\big(Q(y_1,\ldots,y_j,&\,z_{j+1},...,z_M)R(\x,\y)\big) \\
		&= (1+t_j)\Big(Q(y_1,\ldots,y_j,z_{j+1},...,z_M) \cdot\prod\nolimits_{i=1}^{N}\frac{z_j-x_i}{y_j-x_i} \Big)\cdot  R(\x,\y)\\
		&\stackrel{\eqref{eq:Lagrange}}{=} Q(y_1,\ldots,y_{j-1},z_j,...,z_M) R(\x,\y)\,.
	\end{align*}
	In the last step we apply Lagrange interpolation at the $N+1$ nodes $x_1,\ldots, x_N,y_j$ for the polynomial $\tilde{Q}(Z)=Q(y_1,\ldots,y_{j-1},Z,z_{j+1},\ldots, z_M)\in\F(y_1,\ldots,y_{j-1},z_{j+1},\ldots, z_M)[Z]$ of degree 
	\[ \deg \tilde{Q}=\deg_ZQ=\deltaD<N+1. \]
		We conclude by induction that 
	\begin{align*}
		\mathcal{L}_1\mathcal{L}_2\dots \mathcal{L}_{M}\big( Q(\y)R(\x,\y) \big) = Q(\z)R(\x,\y),
	\end{align*}
	as desired.
\end{proof}

\begin{Example}\label{ex:recovering-z}
	Let $N=1$ and consider a pure tensor $z\cdot P(x,y)$ of $\Lambda_{\leq1}[z]\otimes\Lambda_{\leq d-1}[x,y]$. Then $\pi(z\cdot P(x,y))=y\cdot P(x,y)$, and $\tilde{\pi}=\mathcal{L}_1$. We can check directly that:
	\begin{align*}
		\mathcal{L}_1&(y\cdot P(\x,y))=      
		(1+(x, y))\left(y\cdot P(x,y)\cdot\frac{z-x}{y-x}\right)\\
		&=y\cdot P(x,y)\cdot\frac{z-x}{y-x}+x\cdot P(y,x)\cdot\frac{z-y}{x-y}\\
		&=\left(\frac{yz-yx}{y-x}+\frac{xz-xy}{x-y}\right)\cdot P(x,y)\\
		&=z\cdot P(x,y),
	\end{align*}
	so indeed $\mathcal{L}_1$ recovered $z$ after the evaluation $z\mapsto y$, and $\tilde{\pi}=\mathcal{L}_1$ is a left inverse of $\pi$ for $N=M=\deltaD=1, b=d-1$.
\end{Example}

\section{Hook plethysms --- Proof of Theorem \ref{thm:hook}}\label{sec:proof-of-hook}

We prove Theorem \ref{thm:hook} by using the results in the previous section to show
that $(\zeta\otimes1)\circ\psi$ is an isomorphism
from $\Sym_{M}\Sym^{N-1}\E\, \otimes\, \Sym_{M+N}\Sym^{d-N+1}\E$ to
$\Delta^{(M+1,1^{N-1})}\Sym^{d}\E$.

\begin{proof}[Proof of Theorem \ref{thm:hook}]
	Recall from Definition \ref{def:psi} that $\psi$ is a map on symmetric polynomials given by:
	\begin{align*}
		\psi : 
		\Lambda_{\le N-1}[\z]\otimes\Lambda_{\le d-N+1}[\x,\y]
		&\longrightarrow
		\Lambda_{\le d-N+1}[\x]\otimes\Lambda_{\le d}[\y]\\
		P(\x,\y,\z) &\longmapsto P(\x,\y,\y).
	\end{align*}
	By Lemma \ref{lem:psiLands} and Proposition \ref{prop:SL2invariance}, it is an $\SL_2(\F)$-homomorphism with a left inverse $\tilde\psi$ defined in Proposition \ref{prop:left-inverse}. Therefore, the composition
	\[
	(\zeta\otimes1)\,\circ\,\psi : \Sym_{M}\Sym^{N-1}\E\, \otimes\, \Sym_{M+N}\Sym^{d-N+1}\E \longrightarrow \Delta^{(M+1,1^{N-1})}\Sym^{d}\E
	\]
	from Lemma~\ref{lem:psiLands} is an $\SL_2(\F)$-homomorphism with a left inverse $\tilde\psi\circ(\zeta^{-1}\otimes 1)$.
	
	It remains to show that the dimensions of the two representations are equal. Indeed, by~\eqref{eq:dim-plet} and Proposition~\ref{prop:dim-delta}
	\begin{align*}
		\dim& \big(\Sym_{M}\Sym^{N-1}\E\otimes\Sym_{M+N}\Sym^{d-N+1}\E \big)\\
		&=\binom{M+N-1}{M}\binom{M+d+1}{M+N}=\dim\left(\Delta^{(M+1, 1^{N-1})}\Sym^d \E\right).
		\qedhere
	\end{align*} 
\end{proof}

We deduce a corresponding isomorphism of $\GL_2(\F)$-plethysms.

\begin{Corollary}\label{cor:GL2-isom}
	Let $M,d\in\N_0$ and $N\in\N$ such that $N\le d+1$. There is an isomorphism of $\GL_2(\F)$-representations
	\[ 
	{\det}^{\binom{N}{2}}\otimes\Sym_{M}\Sym^{N-1}\E\otimes\Sym_{M+N}\Sym^{d-N+1}\E \cong \Delta^{(M+1,1^{N-1})}\Sym^{d}\E.
	\]
\end{Corollary}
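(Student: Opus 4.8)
The plan is to upgrade the explicit $\SL_2(\F)$-isomorphism furnished by the proof of Theorem~\ref{thm:hook} to a $\GL_2(\F)$-isomorphism, tracking the character by which $\det$ acts. For the quick route, valid when $\F$ is infinite, first observe that ${\det}^{\binom{N}{2}}$ restricts trivially to $\SL_2(\F)$, so Theorem~\ref{thm:hook} already provides an $\SL_2(\F)$-isomorphism between the two sides. Both sides are polynomial representations of $\GL_2(\F)$, and their degrees agree: the left-hand side has polynomial degree $2\binom{N}{2} + M(N-1) + (M+N)(d-N+1) = (M+N)(N-1) + (M+N)(d-N+1) = (M+N)d$, which is $|(M+1,1^{N-1})|\cdot d$, the degree of $\Delta^{(M+1,1^{N-1})}\Sym^d\E$. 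Lemma~\ref{lemma:SL2toGL2} then promotes the $\SL_2(\F)$-isomorphism to a $\GL_2(\F)$-isomorphism.

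To obtain the statement over an arbitrary field I would instead argue with the explicit map $\Phi := (\zeta\otimes 1)\circ\psi$ used in \S\ref{sec:proof-of-hook} and show that it intertwines the $\GL_2(\F)$-actions up to the scalar $(\det g)^{\binom{N}{2}}$. This breaks into two parts. First, $\psi$ (equivalently the map $\pi$ of Definition~\ref{def:pi}) is $\GL_2(\F)$-equivariant: since $\GL_2(\F) = \SL_2(\F)\cdot\{\diag(\alpha,1):\alpha\in\F^\times\}$ and $\SL_2(\F)$-equivariance is Proposition~\ref{prop:SL2invariance}, it suffices to check $g = \diag(\alpha,1)$; this matrix acts on $\Sym^d\E = \F_d[X,Y]$ by $X\mapsto\alpha X$, and the transported action on each polynomial model $\Lambda_{\le k}[\,\cdot\,]$ of \S\ref{subsec:construction} is simply the scaling of every alphabet variable by $\alpha$, which manifestly commutes with the evaluation $z_i\mapsto y_i$ defining $\psi$. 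Second, $\zeta$ introduces the twist: homogenising, $a_\rho(\x)$ corresponds to $\prod_{1\le i<j\le N}(X_iY_j - X_jY_i)$, each of whose $\binom{N}{2}$ factors is scaled by $\det g$ under $g\in\GL_2(\F)$, so since $\GL_2(\F)$ acts by algebra automorphisms on $\F[\XX,\YY]$ we obtain $g\cdot\bigl(a_\rho(\x)P(\x)\bigr) = (\det g)^{\binom{N}{2}}a_\rho(\x)\,(g\cdot P(\x))$, hence $g\cdot\bigl((\zeta\otimes1)(v)\bigr) = (\det g)^{\binom{N}{2}}(\zeta\otimes1)(g\cdot v)$ for all $v$. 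Combining the two steps gives $g\cdot\Phi(v) = (\det g)^{\binom{N}{2}}\Phi(g\cdot v)$ for all $g\in\GL_2(\F)$, so $1\otimes v\mapsto\Phi(v)$ is a $\GL_2(\F)$-equivariant bijection from ${\det}^{\binom{N}{2}}\otimes\bigl(\Sym_M\Sym^{N-1}\E\otimes\Sym_{M+N}\Sym^{d-N+1}\E\bigr)$ onto $\Delta^{(M+1,1^{N-1})}\Sym^d\E$, as required.

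The main point requiring care is the description of the $\GL_2(\F)$-action on the dehomogenised models $\Lambda_{\le k}[\,\cdot\,]$: it is \emph{not} given by naive substitution of the matrix into the variables, so both the claim that $\diag(\alpha,1)$ acts by scaling and the determinant twist coming from $a_\rho(\x)$ are cleanest to verify after passing back to the homogeneous ring $\F[\XX,\YY]$, where the action genuinely is by algebra automorphisms. Everything else is routine bookkeeping; in particular one should record that $\diag(\alpha,1)$ respects the degree truncations, so that the scaled action is well defined on each $\Lambda_{\le k}$, and that the abstract route via Lemma~\ref{lemma:SL2toGL2} only settles infinite fields, which is why the explicit computation is needed for full generality.
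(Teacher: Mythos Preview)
Your first paragraph is exactly the paper's proof: invoke Theorem~\ref{thm:hook}, compute the polynomial degree as $(M+N)d$ on both sides, and apply Lemma~\ref{lemma:SL2toGL2}. The only difference is that the paper disposes of the finiteness restriction in one line, by first passing to an infinite extension $K\supseteq\F$: since the map $\Phi=(\zeta\otimes1)\circ\psi$ is defined over $\F$ and (by the proof of Lemma~\ref{lemma:SL2toGL2}) the very same map $\Phi_K$ is $\GL_2(K)$-equivariant, restricting to $\GL_2(\F)\subseteq\GL_2(K)$ shows $\Phi$ itself is $\GL_2(\F)$-equivariant. So your closing remark that ``the abstract route via Lemma~\ref{lemma:SL2toGL2} only settles infinite fields'' undersells that route; the field-extension trick already gives full generality, and your second argument, while correct, is not strictly needed.

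That said, your explicit argument is a genuine and pleasant alternative. Decomposing $\GL_2(\F)=\SL_2(\F)\cdot\{\diag(\alpha,1)\}$, checking that $\diag(\alpha,1)$ acts on each $\Lambda_{\le k}$ by uniform scaling of the alphabet (hence commutes with the evaluation $\psi$), and then reading off the $(\det g)^{\binom{N}{2}}$ twist from the homogenised Vandermonde $\prod_{i<j}(X_iY_j-X_jY_i)$ is entirely correct and makes the origin of the determinant power transparent. The trade-off is the usual one: the paper's argument is a two-line appeal to a general lemma, while yours is a direct computation that avoids any appeal to base change but requires unpacking the $\GL_2$-action on the dehomogenised model --- which, as you rightly flag, must be done via $\F[\XX,\YY]$ rather than by naive substitution.
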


\begin{proof}
By passing to a field extension we may assume that $\F$ is infinite.
The result then follows immediately from Theorem~\ref{thm:hook} by applying Lemma~\ref{lemma:SL2toGL2};
note that
the degree of the polynomial representation on the left-hand side
is $N(N-1) + M(N-1) + (M+N)d - (M+N)(N-1) = (M+N)d$, which agrees with
the right-hand side.
\end{proof}

\section{Trinomial plethysms --- Proof~of~Theorem~\ref{thm:GTL}}

In this section, we gather the results about $\phi$ presented in \S\ref{sec:def-of-the-morph} to prove Theorem \ref{thm:GTL}.
We shall make use of the trinomial revision identity~\eqref{eq:GTL}
\[ \binom{M+N}{M}\binom{M+N+d}{M+N} = \binom{N+d}{N} \binom{M+N+d}{M} \]
stated in the introduction.

\begin{proof}[Proof of Theorem \ref{thm:GTL}]
	Recall from Definition \ref{def:phi} that we have a map $\phi$ of symmetric polynomials:
	\[
	\phi : 
	\Lambda_{\le N}[\z]\otimes\Lambda_{\le d}[\x,\y]
	\longrightarrow
	\Lambda_{\le d}[\x]\otimes\Lambda_{\le d+N}[\y].
	\]
	By Lemma \ref{lem:phiLands} and Proposition \ref{prop:SL2invariance}, it induces an $\SL_2(\F)$-homomorphism 
	\[
	\phi:\Sym_M\Sym^N\E\otimes\Sym_{N+M}\Sym^d\E \rightarrow
	\Sym_N\Sym^{d}\E\otimes\Sym_M\Sym^{d+N}\E
	\]
	with a left inverse $\tilde\phi$ defined in Proposition~\ref{prop:left-inverse}. 
	To conclude the proof, note that by~\eqref{eq:dim-plet} and~\eqref{eq:GTL}
	these representations have equal dimensions. 
\end{proof}

\begin{Corollary}\label{cor:extGTL}
	Applying the Wronskian isomorphism~\eqref{eq:Wronskian}
	and replacing $d$ with $e-M-N+1\geq 0$ we get the equivalent form
	\[ 
	\bwedge{M} \Sym^{N+M-1} \E \otimes \bwedge{M+N} \Sym^{e} \E \cong
	\bwedge{N} \Sym^{e-M} \E \otimes \bwedge{M} \Sym^{e} \E \] 
	which is also of note.
\end{Corollary}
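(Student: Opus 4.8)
The plan is to obtain this as a purely formal consequence of Theorem~\ref{thm:GTL}: the two statements differ only in that every lower symmetric power plethysm occurring in Theorem~\ref{thm:GTL} is to be replaced by an exterior power plethysm using the Wronskian isomorphism~\eqref{eq:Wronskian} of Proposition~\ref{prop:Wronskian}, after which the substitution $d=e-M-N+1$ is made.

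Concretely, I would first apply Proposition~\ref{prop:Wronskian}, in the form $\Sym_a\Sym^b\E\cong\bwedge{a}\Sym^{a+b-1}\E$, separately to each of the four tensor factors appearing in Theorem~\ref{thm:GTL}. This yields
\[
\Sym_M\Sym^N\E\cong\bwedge{M}\Sym^{M+N-1}\E,\qquad
\Sym_{M+N}\Sym^d\E\cong\bwedge{M+N}\Sym^{M+N+d-1}\E,
\]
\[
\Sym_N\Sym^{d}\E\cong\bwedge{N}\Sym^{N+d-1}\E,\qquad
\Sym_M\Sym^{d+N}\E\cong\bwedge{M}\Sym^{M+N+d-1}\E.
\]
Since each of these is an isomorphism of $\SL_2(\F)$-representations, and since $A\cong A'$ and $B\cong B'$ imply $A\otimes B\cong A'\otimes B'$, Theorem~\ref{thm:GTL} is transformed into
\[
\bwedge{M}\Sym^{M+N-1}\E\otimes\bwedge{M+N}\Sym^{M+N+d-1}\E\;\cong\;\bwedge{N}\Sym^{N+d-1}\E\otimes\bwedge{M}\Sym^{M+N+d-1}\E.
\]
Finally I would substitute $d=e-M-N+1$, which is admissible exactly because $d\ge 0$; this sends $M+N+d-1\mapsto e$ and $N+d-1\mapsto e-M$, so the display becomes the claimed isomorphism.

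The only matters that require comment are routine, so there is no genuine obstacle. The exponent bookkeeping is immediate. For the applicability of Proposition~\ref{prop:Wronskian} in each of the four instances, I would note that the isomorphism $\Sym_a\Sym^b\E\cong\bwedge{a}\Sym^{a+b-1}\E$ holds for all $a,b\in\N_0$: the map $\zeta$ of Proposition~\ref{prop:Wronskian} is multiplication by the Vandermonde determinant $a_\rho$, which is manifestly injective and $\SL_2(\F)$-equivariant, its codomain $a_\rho(\x)\Lambda_{\le b}[\x]$ (with $|\x|=a$) is identified with $\bwedge{a}\Sym^{a+b-1}\E$ by Remark~\ref{remark:identification-of-wedge}, and by~\eqref{eq:dim-plet} both sides have dimension $\binom{a+b}{a}$, so $\zeta$ is an isomorphism irrespective of the sizes of $a$ and $b$. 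Thus all the content of Corollary~\ref{cor:extGTL} already resides in Theorem~\ref{thm:GTL}, and the corollary merely records the form this result takes in the exterior-power model, which — as with the original Wronskian isomorphism — is frequently the more convenient one.
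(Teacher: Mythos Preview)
Your proposal is correct and follows exactly the approach indicated in the paper: the corollary carries no separate proof there, since its statement already records that one simply applies the Wronskian isomorphism~\eqref{eq:Wronskian} to each of the four factors in Theorem~\ref{thm:GTL} and then substitutes $d=e-M-N+1$. Your additional observation that the hypothesis $N\le d+1$ in Proposition~\ref{prop:Wronskian} is not actually needed for the argument (so that all four applications of the Wronskian isomorphism are valid without further constraint) is a welcome point of care.
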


\begin{Remark}\label{remark:GTLq}
	As in Corollary~\ref{cor:GL2-isom}, we deduce a $\GL_2(\F)$-isomorphism
	\[
	\Sym_M\Sym^N\E\otimes\Sym_{N+M}\Sym^d\E \cong
	\Sym_N\Sym^{d}\E\otimes\Sym_M\Sym^{d+N}\E.
	\]
	When $\F=\C$, equating the characters gives the $q$-binomial identity:
	\[
	\qbinom{N+M}{M}\qbinom{d+N+M}{N+M} = \qbinom{d+N}{N}\qbinom{d+N+M}{M},
	\]
	lifting~\eqref{eq:GTL} to $q$-binomial coefficients.
\end{Remark}

\section{Team-and-leader isomorphisms --- Proof~of~Corollary~\ref{cor:T&L}}
\label{sec:TLisomorphisms}
In this section, we deduce the isomorphisms in Corollary \ref{cor:T&L}, and explain why
we refer to them as `team-and-leader' isomorphisms.
For ease of reference we restate Corollary~\ref{cor:T&L} below.

\setcounter{section}{1}
\setcounter{Theorem}{2}
\begin{Corollary}
	Let $K, d \in \N_0$. The following isomorphisms of $\SL_2(\F)$-representations hold:
	\begin{thmlist}
		\item 
		$
		\Sym^{d+K}\E\otimes\Sym_K\Sym^{d}\E\, \cong
		\Sym^K\E\otimes\Sym_{K+1}\Sym^d\E\,;
		$
		\medskip
		\item 
		$
		\Sym_K\E\otimes\Sym_{K+1}\Sym^{d}\E \cong \Sym^{d}\E\otimes\Sym_K\Sym^{d+1}\E\,;
		$
		\medskip
		\item
		$
		\Sym_{d+K}\E\otimes\Sym_{K}\Sym^{d}\E \cong \Sym_{d}\E\otimes\Sym_K\Sym^{d+1}\E.
		$
		\medskip
	\end{thmlist}
\end{Corollary}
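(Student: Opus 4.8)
The plan is to derive all three isomorphisms as direct specializations of Theorem~\ref{thm:GTL}, using only the trivial identifications $\Sym_1 W \cong W$ for any $\SL_2(\F)$-representation $W$ (immediate from the definition in \S\ref{subsec:exteriorLowerSymmetricPowers}, since $\SymG_1$ is trivial) and $\Sym^1\E = E$ (immediate from \S\ref{subsec:upperSymmetricPowers}). To avoid a clash of notation, rewrite Theorem~\ref{thm:GTL} as
\[
\Sym_a\Sym^b\E\otimes\Sym_{a+b}\Sym^c\E \;\cong\; \Sym_b\Sym^{c}\E\otimes\Sym_a\Sym^{c+b}\E ,
\]
valid for all $a,b,c\in\N_0$.

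First I would obtain item~\eqref{thm:T&L1} by setting $a=1$, $b=K$, $c=d$: the left-hand side becomes $\Sym^K\E\otimes\Sym_{K+1}\Sym^d\E$ and the right-hand side becomes $\Sym_K\Sym^d\E\otimes\Sym^{d+K}\E$, which is item~\eqref{thm:T&L1} up to the (harmless) order of the two tensor factors on each side. Next, item~\eqref{thm:T&L2} follows on setting $a=K$, $b=1$, $c=d$: the left-hand side is $\Sym_K\E\otimes\Sym_{K+1}\Sym^d\E$ and the right-hand side is $\Sym^d\E\otimes\Sym_K\Sym^{d+1}\E$, which is exactly item~\eqref{thm:T&L2}. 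Finally, item~\eqref{thm:T&L3} follows on setting $a=K$, $b=d$, $c=1$: the left-hand side is $\Sym_K\Sym^d\E\otimes\Sym_{K+d}\E$ and the right-hand side is $\Sym_d\E\otimes\Sym_K\Sym^{d+1}\E$, which is item~\eqref{thm:T&L3} after reordering the left-hand factors. In each of the three cases I would also record, as a sanity check, that the dimension identity underlying Theorem~\ref{thm:GTL}, namely~\eqref{eq:GTL} together with~\eqref{eq:dim-plet}, specializes to the corresponding binomial identity (for instance item~\eqref{thm:T&L1} decategorifies to $(d+K+1)\binom{d+K}{K}=(K+1)\binom{d+K+1}{K+1}$).

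There is essentially no obstacle here: the content of Corollary~\ref{cor:T&L} is entirely contained in Theorem~\ref{thm:GTL}, and the only care required is matching up, in each of the three cases, which of the four tensor slots of Theorem~\ref{thm:GTL} plays which role, together with the bookkeeping identities $\Sym_1 W\cong W$ and $\Sym^1\E=E$. After establishing the isomorphisms I would explain the name: decategorifying by taking dimensions via~\eqref{eq:dim-plet} turns items \eqref{thm:T&L1}--\eqref{thm:T&L3} into the three \emph{team-and-leader} counting identities of~\eqref{eq:TLidentities}, in which one counts a finite set together with a distinguished sub-team and, in items~\eqref{thm:T&L1} and~\eqref{thm:T&L3}, a distinguished leader of that team --- for example, in item~\eqref{thm:T&L1} one may either pick a $(K+1)$-person team out of $d+K+1$ people and then choose its leader, or pick the leader first and then choose a further $K$ teammates from the remaining $d+K$. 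I would stress that, because the argument is purely a substitution into Theorem~\ref{thm:GTL}, each of these isomorphisms, like Theorem~\ref{thm:GTL} itself, holds over an arbitrary field $\F$.
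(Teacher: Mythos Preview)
Your proof is correct. Parts~(i) and~(ii) match the paper's argument exactly (the paper writes $M,N$ where you write $a,b$). For part~(iii), however, you take a genuinely more direct route: you substitute $a=K$, $b=d$, $c=1$ into Theorem~\ref{thm:GTL} and use $\Sym_a\Sym^1\E\cong\Sym_a\E$, whereas the paper instead dualizes part~(i), invoking the $\SL_2(\F)$-isomorphisms $(\Sym_a\E)^\star\cong\Sym^a\E$ and $(\Sym_a\Sym^b\E)^\star\cong\Sym_b\Sym^a\E$ from \cite{McDowellWildon}, and then swaps the roles of $d$ and $K$. Your approach is cleaner and entirely self-contained within Theorem~\ref{thm:GTL}; the paper's route, by contrast, exhibits~(iii) explicitly as the dual of~(i), which is a structural observation your substitution does not make visible.
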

\setcounter{section}{7}

\begin{proof}
	Using that 
	$\Sym_1\Sym^aE\cong \Sym^aE$ and $\Sym_a\Sym^1E\cong\Sym_aE$ as  $\SL_2(\F)$-repre\-sentations,
	part (i) follows from Theorem \ref{thm:GTL} by letting $M = 1$ and $N = K$.
	Similarly, part (ii) follows from Theorem \ref{thm:GTL} by letting $M = K$ and $N = 1$. 

	For part (iii), we use the basic properties of dual representations. Since $E$ is self-dual, we have $(\Sym_a E)^\star\cong \Sym^aE$ (see the discussion after the proof of Prop.~2.11 in \cite{McDowellWildon}), and $(\Sym_a\Sym^b\E)^\star\cong \Sym_b\Sym^a\E$ as $\SL_2(\F)$-plethysms (see the proof of Cor.~1.5 in \cite{McDowellWildon}). 
	
	Taking duals of both sides of (i), and applying either of the isomorphisms from the previous paragraph to each tensor factor accordingly, we obtain
	\[
	\Sym_{d+K}\E\otimes\Sym_d\Sym^K\E
	\cong
	\Sym_K\E\otimes\Sym_d\Sym^{K+1}\E
	.
	\]
	This is equivalent to the identity in part (iii) upon swapping $d$ and $K$.
\end{proof}

The first two isomorphisms decategorify by taking dimensions 
to the first two equalities~in
\begin{equation}\label{eq:TLidentities} 
	(d+K+1)\binom{d+K}{K} = (K+1) \binom{d+K+1}{K+1} = (d+1) \binom{d+K+1}{K}. \end{equation}
The third isomorphism decategorifies to the equality of the left-hand and right-hand sides.
We remark that these identities have appealing combinatorial proofs in the `team-and-leader' model.
For instance, to prove the first equality,
take $d+K+1$ people and, for the left-hand side, suppose that one of them is a dictator
who chooses $K$ of the remaining $d+K$ people
to form their goverment; for the right-hand side, suppose instead
that $K+1$ of the people are democratically elected, and they choose a prime minister
from their number in $K+1$ ways.
For the second equality we invite the reader
to find an interpretation using a presidential model.
The $q$-analogues of these identities are of course given by specializing
Remark~\ref{remark:GTLq} appropriately.

\section{Team hierarchy and Catalan combinatorics}

We conclude with an interesting corollary of Theorem \ref{thm:GTL}, drawing from Catalan combinatorics. In the combinatorial interpretation, we consider teams with a layered leadership structure, generalizing the `team-and-leader' model at the end of the previous
section. 

Set $N_0 = 0$ and let $N_k > \cdots > N_2 > N_1 > N_0 = 0$ be integers. Assume that they are in \emph{generic position}, in the sense that $N_j - N_i \ne N_m - N_p$ for all distinct pairs $j > i \ge 0$ and $m > p \ge 0$.
Consider the product
\begin{equation}\label{eq:subsetOfSubsetsStart}
	\binom{N_k}{N_{k-1}} \cdots \binom{N_3}{N_2} \binom{N_2}{N_1}.
\end{equation}
As illustrated in Figure \ref{fig:Catalan}, we may successively apply identity \eqref{eq:GTL} to obtain equalities of the form
\begin{equation}\label{eq:subsetOfSubsetsFlip}
	\cdots \binom{N_j - N_{i-1}}{N_m - N_{i-1}}\binom{N_m - N_{i-1}}{N_{p-1} - N_{i-1}} \cdots
	= \cdots \binom{N_j - N_{i-1}}{N_{p-1} - N_{i-1}}\binom{N_j - N_{p-1}}{N_m - N_{p-1}} \cdots
\end{equation}
for some $j > m \ge p > i \ge 1$.
Theorem \ref{thm:GTL} then lifts this identity to an isomorphism of $\SL_2(\F)$-plethysms. 
We call each of the resulting binomial expressions (up to permutation of the factors) an \emph{election process} on a team hierarchy with $k$ layers.

It turns out that the number of election processes in a team hierarchy with $k$ layers is given by the Catalan number~$C_{k-1}$. We show this in Lemma~\ref{lem:catalan-bijection} below, by constructing an explicit bijection between the election processes and rooted binary trees with $k$ leaves.

\begin{figure}[h]
	\centering
	\begin{adjustbox}{scale=0.8}
		\begin{tikzpicture}[rotate=25]
			\def\radius{2cm}
			
			\node (E0) at ({90 + 0*72}:\radius) {
				\begin{tikzpicture}[x=.5em, y=-.5em, rotate=-45, thick]
					\draw (0,0) -- (3,0);
					\draw (0,0) -- (0,3);
					\draw (0,1) -- (2,1);
					\draw (0,2) -- (1,2);
				\end{tikzpicture}
			};
			\node (E1) at ({90 + 1*72}:\radius) {
				\begin{tikzpicture}[x=.5em, y=-.5em, rotate=-45, thick]
					\draw (0,0) -- (3,0);
					\draw (0,0) -- (0,3);
					\draw (2,0) -- (2,1);
					\draw (0,2) -- (1,2);
				\end{tikzpicture}
			};
			\node (E2) at ({90 + 2*72}:\radius) {
				\begin{tikzpicture}[x=.5em, y=-.5em, rotate=-45, thick]
					\draw (0,0) -- (3,0);
					\draw (0,0) -- (0,3);
					\draw (2,0) -- (2,1);
					\draw (1,0) -- (1,2);
				\end{tikzpicture}
			};
			\node (E3) at ({90 + 3*72}:\radius) {
				\begin{tikzpicture}[x=.5em, y=-.5em, rotate=-45, thick]
					\draw (0,0) -- (3,0);
					\draw (0,0) -- (0,3);
					\draw (1,1) -- (2,1);
					\draw (1,0) -- (1,2);
				\end{tikzpicture}
			};
			\node (E4) at ({90 + 4*72}:\radius) {
				\begin{tikzpicture}[x=.5em, y=-.5em, rotate=-45, thick]
					\draw (0,0) -- (3,0);
					\draw (0,0) -- (0,3);
					\draw (0,1) -- (2,1);
					\draw (1,1) -- (1,2);
				\end{tikzpicture}
			};
			
			\draw[thick, blue, -stealth] (E0) -- (E1);
			\draw[thick, blue, -stealth] (E1) -- (E2);
			\draw[thick, blue, -stealth] (E0) -- (E4);
			\draw[thick, blue, -stealth] (E4) -- (E3);
			\draw[thick, blue, -stealth] (E3) -- (E2);
		\end{tikzpicture}
	\end{adjustbox}
	\hspace{3em}
	\begin{adjustbox}{scale=0.8}
		\begin{tikzpicture}[rotate=25]
			\def\radius{2cm}
			\node (E0) at ({90 + 0*72}:\radius) {
				$\binom{d}{c}\binom{c}{b}\binom{b}{a}$
			};
			\node (E1) at ({90 + 1*72}:\radius) {
				$\binom{d}{b}\binom{d-b}{c-b}\binom{b}{a}$
			};
			\node (E2) at ({90 + 2*72}:\radius) {
				$\binom{d}{a}\binom{d-b}{c-b}\binom{d-a}{b-a}$
			};
			\node (E3) at ({90 + 3*72}:\radius) {
				$\binom{d}{a}\binom{d-a}{c-a}\binom{c-a}{b-a}$
			};
			\node (E4) at ({90 + 4*72}:\radius) {
				$\binom{d}{c}\binom{c}{a}\binom{c-a}{b-a}$
			};
			
			\draw[thick, red, -stealth] (E0) -- (E1);
			\draw[thick, red, -stealth] (E1) -- (E2);
			\draw[thick, red, -stealth] (E0) -- (E4);
			\draw[thick, red, -stealth] (E4) -- (E3);
			\draw[thick, red, -stealth] (E3) -- (E2);
		\end{tikzpicture}
	\end{adjustbox}
	\caption{On the left, the five rooted binary trees with 4 leaves. An arrow is a right tree rotation. On the right, the five election processes on a team hierarchy with 4 layers of sizes $d > c > b > a$. An arrow is an application of \eqref{eq:subsetOfSubsetsFlip}.}
	\label{fig:Catalan}
\end{figure}
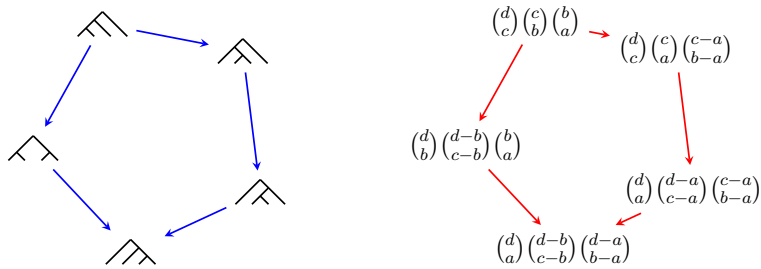

We enumerate the leaves in a rooted binary tree by $1, 2, \ldots, k$ from left to right.
We then label each vertex
by an interval $[i,j]$, where $i$ is the smallest numbered leaf among its descendants, and $j$ is the largest. 
The leaf $i$ is therefore labelled by the interval $[i,i]$.

\begin{Lemma}\label{lem:catalan-bijection}
	There is a one-to-one correspondence between rooted binary trees with $k$ leaves and election processes on team hierarchies with $k$ layers.
\end{Lemma}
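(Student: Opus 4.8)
The plan is to construct a bijection by attaching to each rooted binary tree with $k$ leaves an election process, and to do this recursively following the structure of the tree. First I would set up the correspondence between the \emph{initial} product \eqref{eq:subsetOfSubsetsStart} and the \emph{left comb} (the rooted binary tree in which every internal vertex has a leaf as its left child), and more generally read each binomial $\binom{N_j-N_{i-1}}{N_m-N_{i-1}}$ appearing in an election process as data attached to an internal vertex whose descendant-leaves span a consecutive block. The key observation is that, after the genericity assumption on the differences $N_j-N_i$, an election process is determined by---and determines---an unordered collection of $k-1$ binomials $\binom{N_{j}-N_{i-1}}{N_m-N_{i-1}}$ indexed by triples $j>m>i$, subject to a compatibility/nesting condition: the intervals $[i,j]$ arising this way, together with the singletons $[p,p]$, form the vertex set of a binary tree, with each $\binom{N_j-N_{i-1}}{N_m-N_{i-1}}$ recording how the interval $[i-1,j]$ (equivalently the vertex $[i,j]$ after shifting indices by one leaf) splits into its two children at the leaf-boundary $m$.

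The main steps, in order, are as follows. (1) Define the map $\Phi$ from rooted binary trees with $k$ leaves to election processes: given a tree $T$, relabel its internal vertices by the index intervals as in the paragraph before the lemma, and to the internal vertex with interval $[i,j]$ whose two children have intervals $[i,m-1]$ and $[m,j]$ assign the binomial $\binom{N_j-N_{i-1}}{N_{m-1}-N_{i-1}}$; the product of these $k-1$ binomials over all internal vertices is, up to reordering of factors, an election process, which I verify by induction on $k$ by splitting off the root. (2) Show $\Phi$ is well-defined into election processes, i.e.\ that the product it outputs is reachable from \eqref{eq:subsetOfSubsetsStart} by a sequence of moves \eqref{eq:subsetOfSubsetsFlip}: this is again an induction, observing that a right tree rotation at an internal edge of $T$ corresponds exactly to one application of the trinomial revision identity \eqref{eq:subsetOfSubsetsFlip}, and that any two rooted binary trees with the same number of leaves are connected by a sequence of such rotations (this last fact is classical---the rotation graph on binary trees is connected, indeed it is the $1$-skeleton of the associahedron). (3) Construct the inverse $\Psi$: from an election process, read off the multiset of $k-1$ binomials, recover from each the interval $[i-1,j]$ and the split-point $m$ (here genericity guarantees that $N_j-N_{i-1}$ and $N_m-N_{i-1}$ determine $i,j,m$ uniquely, so there is no ambiguity), check that the resulting intervals are laminar and assemble them into the unique binary tree on $k$ leaves having exactly those intervals as vertices, and verify $\Psi\circ\Phi=\mathrm{id}$ and $\Phi\circ\Psi=\mathrm{id}$.

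I expect step (3)---more precisely, proving that the intervals occurring in an \emph{arbitrary} election process are always laminar and always number exactly $k-1$ with a consistent tree structure---to be the main obstacle, because this is the one place where we must control \emph{every} element of the set of election processes rather than just exhibit enough of them. The clean way around this is to prove an invariant: maintain, throughout any sequence of moves \eqref{eq:subsetOfSubsetsFlip} starting from \eqref{eq:subsetOfSubsetsStart}, that the current product is of the form $\prod_v \binom{N_{j(v)}-N_{i(v)-1}}{N_{m(v)-1}-N_{i(v)-1}}$ for the internal vertices $v$ of some rooted binary tree on $k$ leaves, and check that a single application of \eqref{eq:subsetOfSubsetsFlip} preserves this form while performing a right rotation on the associated tree. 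Granting the invariant, $\Phi$ is automatically surjective, and genericity makes $\Psi$ well-defined and a two-sided inverse, so the bijection---and hence the Catalan count $C_{k-1}$ via step (2)'s connectivity---follows. Along the way the base case $k=1$ (empty product, single leaf) and $k=2$ (one binomial, two leaves) are immediate, and the case $k=4$ recorded in Figure~\ref{fig:Catalan} serves as a consistency check.
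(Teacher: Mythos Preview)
Your proposal is correct and follows essentially the same route as the paper: you define the same map from trees to binomial products (your $\binom{N_j-N_{i-1}}{N_{m-1}-N_{i-1}}$ for a vertex $[i,j]$ with left child $[i,m-1]$ is exactly the paper's $\binom{N_j-N_{i-1}}{N_\ell-N_{i-1}}$ with $\ell$ the largest index for which $[i,\ell]$ is a vertex), you match the left comb with the initial product~\eqref{eq:subsetOfSubsetsStart}, and you identify a right tree rotation with a single application of~\eqref{eq:subsetOfSubsetsFlip}, invoking connectivity of the rotation graph (the paper cites Tamari for this). The only difference is emphasis: the paper concludes bijectivity directly from the rotation/flip intertwining, whereas you spell out the inverse $\Psi$ and the laminar-interval invariant explicitly; your invariant argument is precisely what makes the paper's terse ``Hence, the map is bijective'' rigorous.
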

\begin{proof}
	Let $C$ be the \emph{left comb} on $k$ vertices, defined to be	the rooted binary tree whose internal vertices are labeled $[1,2], [1,3], \ldots, [1,k]$.
	For instance, the tree below is the left comb on $4$ vertices with non-leaf nodes marked $\bullet$
	and leaves marked $\circ$.
	\[
	\begin{tikzpicture}[x=1.5em, y=-1.5em, rotate=-45, thick]
		\draw (0,0) -- (3,0); \node[above] at (0,0) {$\scriptstyle [1,4]$};
		\draw (0,0) -- (0,3); \node[above, left, yshift=5pt] at (0,1) {$\scriptstyle [1,3]$};
		\draw (0,1) -- (2,1); \node[above, left, yshift=5pt] at (0,2) {$\scriptstyle [1,2]$};
		\draw (0,2) -- (1,2);
		\node [below] at (0,3) {$\scriptstyle [1,1]$};
		\node [below] at (1,2) {$\scriptstyle [2,2]$};
		\node [below] at (2,1) {$\scriptstyle [3,3]$};
		\node [below] at (3,0) {$\scriptstyle [4,4]$};
		\node at (0,0) {$\bullet$};
		\node at (0,1) {$\bullet$};
		\node at (0,2) {$\bullet$};
		\wcirc{0}{3}
		\wcirc{1}{2}
		\wcirc{2}{1}
		\wcirc{3}{0}

	\end{tikzpicture}
	\]
	It is well understood \cite{Tamari} that all other rooted binary trees can be generated from $C$ by successively applying \emph{right tree rotations} to its subtrees:
		\begin{equation}
		\label{eq:treeRotation}
		\begin{adjustbox}{scale=0.8}
			\begin{tikzpicture}[x=2em, y=-2em, rotate=-45, thick, baseline=-2em]
				\filldraw (0,0) circle (2pt) node[anchor=south] {$[i,j]$} -- (3,0)  node[anchor=north] {$[m+1,j]$};
				\filldraw (0,0) -- (0,3)node[anchor=north east] {$[i,p-1]$};
				\filldraw (0,2) circle (2pt) node[anchor=south east, xshift=3pt, yshift=-3pt] {$[i,m]$~{\,}} -- (1,2);
				\filldraw (1,2) node[anchor=north] {$[p,m]$};
				\node at (0,3) {$\bullet$};
				\node at (1,2) {$\bullet$};
				\node at (3,0) {$\bullet$};
			\end{tikzpicture}
		\end{adjustbox}
		\longrightarrow 
		\begin{adjustbox}{scale=0.8}
			\begin{tikzpicture}[x=2em, y=-2em, rotate=-45, thick, baseline=-2em]
				\filldraw (0,0) circle (2pt) node[anchor=south] {$[i,j]$} -- (3,0)  node[anchor=north] {$[m+1,j]$};
				\filldraw (0,0) -- (0,3) node[anchor=north east] {$[i,p-1]$};
				\filldraw (1,0) circle (2pt) node[anchor=south west, yshift=-2pt] {$[p,j]$~{\,}} -- (1,2);
				\filldraw (1,2) node[anchor=north] {$[p,m]$};
				\node at (0,3) {$\bullet$};
				\node at (1,2) {$\bullet$};
				\node at (3,0) {$\bullet$};
			\end{tikzpicture}        
		\end{adjustbox}
	\end{equation}
	
	Let $T$ be a rooted binary tree with $k$ leaves, let $N_k > \cdots > N_2 > N_1 > 0$ be integers, and set $N_0 = 0$. We assign a binomial coefficient to each internal node of $T$ as follows. Let $[i,j]$ be an internal node, and let $\ell$
	be largest such that $i \leq \ell < j$ and $[i,\ell]$ is a vertex of $T$. (It is possible that $[i,\ell]$ is a
	leaf.) Then, $[i,j]$ is assigned to
	\[
	\binom{N_j - N_{i-1}}{N_{\ell}-N_{i-1}}.
	\]
	The tree $T$ is mapped to the product of the binomial coefficients assigned to its internal vertices.
	Since $N_0=0$,  the left comb $C$ is sent to
	\[
	\binom{N_k}{N_{k-1}} \cdots \binom{N_3}{N_2} \binom{N_2}{N_1}.
	\]
	The pairs of vertices highlighted in~\eqref{eq:treeRotation} are mapped to the left-hand and right-hand sides of~\eqref{eq:subsetOfSubsetsFlip}, respectively. Hence, the map is bijective.
\end{proof}

\begin{Example}
	We have the following correspondence:
	\[
	\begin{adjustbox}{scale=0.8}
		\begin{tikzpicture}[x=2em, y=-2em, rotate=-45, thick, baseline=-3em]
			\filldraw (0,0) circle (2pt) node[anchor=west] {~$[1,5]$} -- (4,0);
			\filldraw (0,0) -- (0,4);
			\filldraw (1,0) circle (2pt) node[anchor=west, blue] {~$[2,5]$} -- (1,3);
			\filldraw (1,2) circle (2pt) node[anchor=west] {~$[2,3]$} -- (2,2);
			\filldraw (3,0) circle (2pt) node[anchor=west] {~$[4,5]$} -- (3,1);
			\filldraw[blue] (1,0) circle (3pt);
			\draw[gray] (0,4) node[anchor = north] {$[1,1]$};
			\draw[gray] (1,3) node[anchor = north] {$[2,2]$};
			\draw[gray] (2,2) node[anchor = north] {$[3,3]$};
			\draw[gray] (3,1) node[anchor = north] {$[4,4]$};
			\draw[gray] (4,0) node[anchor = north] {$[5,5]$};
		\wcirc{0}{4}
		\wcirc{1}{3}
		\wcirc{2}{2}
		\wcirc{3}{1}
		\wcirc{4}{0}
		\end{tikzpicture}
	\end{adjustbox}
	\longleftrightarrow \binom{N_5}{N_1} \textcolor{blue}{\binom{N_5-N_1}{N_3-N_1}} \binom{N_5-N_3}{N_4-N_3} \binom{N_3-N_1}{N_2-N_1}.
	\]
	For instance, consider the highlighted vertex $[2,5]$. The largest $\ell$ such that $[2,\ell]$ is a vertex, is $\ell=3$. Consequently, it corresponds to $\binom{N_5 - N_{2-1}}{N_{3} - N_{2-1}}$.
\end{Example}

\begin{Corollary}
	Given integers $N_k > \cdots > N_2 > N_1 > N_0 = 0$ in generic position, Corollary~\ref{cor:extGTL} yields $C_{k-1}$ isomorphic constructions of $\bigotimes_{i = 1}^{k-1} \Ext^{N_i} \Sym^{N_{i+1}-1} \E$.
\end{Corollary}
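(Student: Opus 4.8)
The plan is to translate the combinatorics of Lemma~\ref{lem:catalan-bijection} into statements about $\SL_2(\F)$-representations, using the dimension formula~\eqref{eq:dim-plet} as a dictionary. First I would record the elementary observation that, since $\dim\Ext^{b}\Sym^{a-1}\E=\binom{a}{b}$ for $0\le b\le a$, every binomial coefficient $\binom{N_j-N_{i-1}}{N_\ell-N_{i-1}}$ appearing in an election process (so with $i\le\ell<j$, exactly as in the proof of Lemma~\ref{lem:catalan-bijection}, whence $0\le N_\ell-N_{i-1}\le N_j-N_{i-1}$) is the dimension of the wedge plethysm $\Ext^{N_\ell-N_{i-1}}\Sym^{N_j-N_{i-1}-1}\E$. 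Replacing each binomial factor of an election process by the corresponding wedge plethysm, and each product by a tensor product, attaches to every election process a tensor product of $k-1$ wedge plethysms; the election process coming from the left comb is attached to $\bigotimes_{i=1}^{k-1}\Ext^{N_i}\Sym^{N_{i+1}-1}\E$, because $\binom{N_{i+1}}{N_i}=\dim\Ext^{N_i}\Sym^{N_{i+1}-1}\E$ reproduces the product~\eqref{eq:subsetOfSubsetsStart}.

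Next I would lift a single rewriting step~\eqref{eq:subsetOfSubsetsFlip} to an $\SL_2(\F)$-isomorphism. Setting $A=N_j-N_{i-1}$, $B=N_m-N_{i-1}$ and $C=N_{p-1}-N_{i-1}$, the two affected factors change according to
\[ \binom{A}{B}\binom{B}{C}=\binom{A}{C}\binom{A-C}{B-C}. \]
This is precisely the decategorification of Corollary~\ref{cor:extGTL} under the substitution $M=C$, $N=B-C$, $e=A-1$: the left-hand side of that corollary then reads $\Ext^{C}\Sym^{B-1}\E\otimes\Ext^{B}\Sym^{A-1}\E$ and its right-hand side reads $\Ext^{C}\Sym^{A-1}\E\otimes\Ext^{B-C}\Sym^{A-C-1}\E$, and these are, up to the order of the tensor factors, exactly the wedge plethysms that the previous step attaches to the two sides of~\eqref{eq:subsetOfSubsetsFlip}. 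The hypothesis of Corollary~\ref{cor:extGTL} that $e-M-N+1\ge 0$ becomes $A-B=N_j-N_m\ge 0$, which holds since $j>m$; similarly $e=N_j-N_{i-1}-1\ge 0$ and $M=N_{p-1}-N_{i-1}\ge 0$ follow from $N_0<N_1<\cdots<N_k$ together with the index inequalities $j>m\ge p>i\ge 1$. Tensoring the isomorphism of Corollary~\ref{cor:extGTL} with the identity on the wedge plethysms attached to the factors left untouched by the step (the ``$\cdots$'' in~\eqref{eq:subsetOfSubsetsFlip}) then produces an $\SL_2(\F)$-isomorphism between the tensor products attached to the two election processes joined by that rewriting step.

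Finally I would assemble these local isomorphisms. By Lemma~\ref{lem:catalan-bijection} the election processes on a hierarchy with $k$ layers biject with the $C_{k-1}$ rooted binary trees with $k$ leaves, and every such tree arises from the left comb by a sequence of right tree rotations~\eqref{eq:treeRotation} \cite{Tamari}, each of which (by the final paragraph of the proof of Lemma~\ref{lem:catalan-bijection}) realises a rewriting step~\eqref{eq:subsetOfSubsetsFlip}. Composing the $\SL_2(\F)$-isomorphisms from the second step along such a sequence shows that the tensor product attached to an arbitrary election process is $\SL_2(\F)$-isomorphic to the one attached to the left comb, namely $\bigotimes_{i=1}^{k-1}\Ext^{N_i}\Sym^{N_{i+1}-1}\E$; hence all $C_{k-1}$ constructions are pairwise isomorphic, as claimed. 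I expect the only real obstacle to be the bookkeeping in the middle step: one must keep track of which $N_\ell$ is selected by the ``largest $\ell$'' rule at the two internal vertices affected by a rotation, before and after, so that the two sides of~\eqref{eq:subsetOfSubsetsFlip} are matched with the two sides of Corollary~\ref{cor:extGTL} with no shift of indices, and then check that the boundedness conditions above all genuinely follow from $N_0<N_1<\cdots<N_k$ and the inequalities $j>m\ge p>i\ge 1$. The genericity hypothesis on the $N_i$ is not needed for the isomorphisms themselves; it enters only through Lemma~\ref{lem:catalan-bijection}, to guarantee that the number of distinct election processes is exactly $C_{k-1}$.
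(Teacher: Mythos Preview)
Your proposal is correct and matches the argument the paper intends: the corollary is stated without proof, as it follows immediately from Lemma~\ref{lem:catalan-bijection} together with the observation (made just before~\eqref{eq:subsetOfSubsetsFlip}) that each rewriting step lifts via Corollary~\ref{cor:extGTL} to an $\SL_2(\F)$-isomorphism. Your write-up simply makes explicit the dictionary $\binom{a}{b}\leftrightarrow\Ext^{b}\Sym^{a-1}\E$ and the parameter-matching with Corollary~\ref{cor:extGTL}, which the paper leaves to the reader.
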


\begin{Example}
	The $\SL_2(\F)$-plethysms shown in the diagram below  
	each of dimension $\binom{15}{9}\binom{9}{5}\binom{5}{2}$, are pairwise isomorphic via successive applications of the isomorphism from Corollary~\ref{cor:extGTL}.
	
	\[
	\begin{adjustbox}{scale=0.8}
		\begin{tikzpicture}[rotate=20, scale=1.5]
			\def\radius{2cm}
			\node (E0) at ({90 + 0*72}:\radius) {
				$\Ext^{9}\Sym^{14}E\otimes\Ext^{5}\Sym^8E\otimes\Ext^{2}\Sym^4E$
			};
			\node (E1) at ({90 + 1*72}:\radius) {
				$\Ext^{5}\Sym^{14}E\otimes\Ext^{4}\Sym^9E\otimes\Ext^{2}\Sym^4E$
			};
			\node (E2) at ({90 + 2*72}:\radius) {
				$\Ext^{2}\Sym^{14}E\otimes\Ext^{4}\Sym^9E\otimes\Ext^{3}\Sym^{12}E$
			};
			\node (E3) at ({90 + 3*72}:\radius) {
				$\Ext^{2}\Sym^{14}E\otimes\Ext^{7}\Sym^{12}E\otimes\Ext^{3}\Sym^6E$
			};
			\node (E4) at ({90 + 4*72}:\radius) {
				$\Ext^{9}\Sym^{14}E\otimes\Ext^{2}\Sym^8E\otimes\Ext^{3}\Sym^6E$
			};
			
			\draw[thick, black, -stealth] (E0) -- (E1);
			\draw[thick, black, -stealth] (E1) -- (E2);
			\draw[thick, black, -stealth] (E0) -- (E4);
			\draw[thick, black, -stealth] (E4) -- (E3);
			\draw[thick, black, -stealth] (E3) -- (E2);
		\end{tikzpicture}
	\end{adjustbox}
	\]
      \end{Example}

      \section*{Acknowledgements}
      We thank Darij Grinberg for his comments to a previous version of this document. We thank Joseph Pappe for sharing with us a bijection between election processes and the vertices of the associahedron.

\providecommand{\bysame}{\leavevmode\hbox to3em{\hrulefill}\thinspace}
\providecommand{\MR}{\relax\ifhmode\unskip\space\fi MR }
\providecommand{\MRhref}[2]{%
	\href{http://www.ams.org/mathscinet-getitem?mr=#1}{#2}
}
\providecommand{\href}[2]{#2}

\end{document}